\title{Torsion groups of elliptic curves over $\mathbb{Q}(\mu_{p^{\infty}})$}
\author{Tomislav Gu\v{z}vi\'{c}}
\address{Department od Mathematics, University of Zagreb, Bijeni\v{c}ka cesta 30, Zagreb, Croatia}
\email{tguzvic@math.hr}
\urladdr{https://web.math.pmf.unizg.hr/~tguzvic/}
\author{Borna Vukorepa}
\address{Department of Mathematics, University of Zagreb, Bijeni\v{c}ka cesta 30, Zagreb, Croatia}
\email{bovukor@math.hr}
\urladdr{https://www.math.pmf.unizg.hr/hr/borna-vukorepa}
\keywords{Elliptic curves}
\subjclass[2010]{11G05}
\begin{document}
\begin{abstract}
Let $E/\Q$ be an elliptic curve and $p \in \{5,7,11 \}$ be a prime. We determine the possibilities for $E(\mathbb{Q}(\zeta_{p}))_{tors}$. Additionally, we determine all the possibilities for $E(\mathbb{Q}(\zeta_{16}))_{tors}$ and $E(\mathbb{Q}(\zeta_{27}))_{tors}$. Using these results we are able to determine the possibilities for $E(\mathbb{Q}(\mu_{p^{\infty}}))_{tors}$.
\end{abstract}
\maketitle
\section{Introduction}
 Let $K$ be a number field such that $[K:\mathbb{Q}]=d$ and let $E/K$ be an elliptic curve. A celebrated theorem of Mordell and Weil shows that $E(K)$ is a finitely generated abelian group. Therefore this group can be decomposed as $E(K)=E(K)_{tors} \oplus \mathbb{Z}^{r}$, $r \ge 0$. It is known that $E(K)_{tors}$ is of the form $C_{m} \oplus C_{n}$ for two positive integers $m,n$ such that $m$ divides $n$, where $C_m$ and $C_n$ denote cyclic groups of order $m$ and $n$, respectively.
\par
One of the goals in the theory of elliptic curves is the classification of torsion groups of elliptic curves defined over various fields.
\par
Let $d$ be a positive integer. Define $\Phi(d)$ to be the set of possible isomorphism classes of groups $E(K)_{tors}$, where $K$ runs through all number fields $K$ of degree $d$ and $E$ runs through all elliptic curves over $K$. In \cite{21}, Merel proved that $\Phi(d)$ is finite for all positive integers $d$. The set $\Phi(1)$ can be seen in Theorem \ref{Theorem 2.1} and was determined by Mazur \cite{mazurtorzija}.
\begin{tm}[Mazur, \cite{mazurtorzija}]
\label{Theorem 2.1}
Let $E/\mathbb{Q}$ be an elliptic curve. Then

\[ E(\mathbb{Q})_{tors} \cong
\begin{cases} 
      C_m, & m=1,...,10, 12, \\
      C_{2} \oplus C_{2m}, & m=1,...,4.
   \end{cases} \]
\end{tm}
The set $\Phi(2)$ has been determined by Kenku, Momose and Kamienny \cite{kenkumomose}, \cite{kamienny}. Derickx, Etropolski, Hoeij, Morrow and Zureick-Brown have determined $\Phi(3)$ in \cite{hoeij}.
\par  
 Define $\Phi^{\mathrm{CM}}(d)$ to be the set of possible isomorphism classes of groups $E(K)_{tors}$, where $K$ runs through all number fields $K$ of degree $d$ and $E$ runs through all elliptic curves with complex multiplication ($\mathrm{CM}$). The set $\Phi^{\mathrm{CM}}(1)$ has been determined by Olson in \cite{32} and $\Phi^{\mathrm{CM}}(d)$ for $d=2,3$ by Zimmer and his collaborators in \cite{80}, \cite{81} and \cite{82}.
    The sets $\Phi^{\mathrm{CM}}(d)$, for $4\le d \le 13$ have been determined by Clark, Corn, Rice and Stankewicz in \cite{30}. Bourdon, Pollack and Stankewicz have determined torsion groups of $\mathrm{CM}$ elliptic curves over odd degree number fields in \cite{bourdonpollack}.
\par
 Define $\Phi_{\mathbb{Q}}(d) \subseteq \Phi(d)$ to be the set of possible isomorphism classes of groups $E(K)_{tors}$, where $K$ runs through all number fields $K$ of degree $d$ and $E$ runs through all elliptic curves defined over $\mathbb{Q}$. For $d=2,3$, the sets $\Phi_{\mathbb{Q}}(d)$ have been determined by Najman \cite{najman_ratcubic}.
 
 \begin{tm} \label{najmanquadratic}
Let $E/\mathbb{Q}$ be an elliptic curve and $K/\mathbb{Q}$ a quadratic extension. Then $E(K)_{tors}$ is isomorphic to the one of the following groups:
    \begin{gather*}
    C_{m}, \quad m = 1, 2, \dots, 9, 10, 12, 15, 16\\
    C_{2} \oplus C_{2m}, \quad m = 1, 2, 3, 4, 5, 6\\
    C_{3} \oplus C_{3m}, \quad m = 1, 2\\
    C_{4} \oplus C_{4}.
    \end{gather*}
     $C_{15}$ is the only group which appears in only finitely many cases, and only over the extensions $\mathbb{Q}(\sqrt{5})$ and $\mathbb{Q}(\sqrt{-15})$.
\end{tm}

\begin{tm} \label{najmancubic}
Let $E/\mathbb{Q}$ be an elliptic curve and $K/\mathbb{Q}$ a cubic extension. Then $E(K)_{tors}$ is isomorphic to the one of the following groups:
    \begin{gather*}
    C_{m}, \quad m = 1, 2, \dots, 10, 12, 13, 14, 18, 21\\
    C_{2} \oplus C_{2m}, \quad m = 1, 2, 3, 4, 7.
    \end{gather*}
     $C_{21}$ is the only group which appears in only finitely many cases, and only over the extension $\mathbb{Q}(\zeta_{9})^{+}$.
\end{tm}

The set $\Phi_{\mathbb{Q}}(4)$ has been determined by Chou \cite{chou2} and Gonz\'alez-Jim\'enez and Najman \cite{growth}. The set $\Phi_{\mathbb{Q}}(5)$ has been determined by Gonz\'alez-Jim\'enez in \cite{38}. Gonz\'alez-Jim\'enez and Najman have also proved that $\Phi_{\mathbb{Q}}(p)=\Phi(1)$ for primes $p \ge 7$ in \cite{growth}. For an odd prime $\ell$ and a positive integer $d$, Propp \cite{90} has determined when there exists a degree $d$ number field $K$ and an elliptic curve $E/K$ with $j(E) \in \mathbb{Q} \setminus \{ 0, 1728 \}$ such that $E(K)_{tors}$ contains a point of order $\ell$.

\smallskip

Let $\mu_n$, for positive integer $n$, be the set of all complex numbers $\omega$ such that $\omega^n = 1$. Note that for a prime number $p$ we have that $\Q(\mu_p) = \Q(\zeta_p)$, where $\zeta_p$ is, as usual, $p$\textsuperscript{th} primitive root of unity.

For a prime number $p$, we define a set $\mu_{p^\infty}$ as the set of all complex numbers $\omega$ for which there exists non-negative integer $k$ such that $\omega^{p^k} = 1$. Note that $\Qz$ is the set $\Q$ extended with all $p^{n^\text{th}}$primitive roots of unity.

In \cite{guzkri}, the authors considered the following problem: assume that $E/\mathbb{Q}$ is an elliptic curve, $p$ a prime number and $K=\Q(\mu_p^{\infty})$. They show that the torsion subgroup of $E$ grows only over small subfields of $K$. More precisely, they showed the following:

\begin{tm} \label{teo:rast_qzetap}
Let $E/\Q$ be an elliptic curve, then for a prime number $p \geq 5$ it holds that
\[E(\Qz)_\tors = E(\Qz[p])_\tors.\]
Furthermore,
\[E(\Qz[3][\infty])_\tors = E(\Qz[3^3])_\tors \qquad \text{and} \qquad E(\Qz[2][\infty])_\tors = E(\Qz[2^4])_\tors.\]
\end{tm}
\begin{remark}
This result is ``the best possible''. For $E = \href{http://www.lmfdb.org/EllipticCurve/Q/27a4}{27a4}$ we have that
\[E(\Qz[3^2])_\tors = C_{9} \subsetneq C_{27} = E(\Qz[3^3])_\tors\]
and for $E = \href{http://www.lmfdb.org/EllipticCurve/Q/32a4}{32a4}$ it holds that
\[E(\Qz[2^3])_\tors = C_{2} \oplus C_{4} \subsetneq C_{2} \oplus C_{8} = E(\Qz[2^4])_\tors.\]
\end{remark}

It becomes natural to ask how can the torsion group of $E/\mathbb{Q}$ grow when we consider the base change $E/\mathbb{Q}(\zeta_p)$. This becomes much harder then it seems as $p$ grows because our methods sometimes rely on pure computation.
\\
Our results are the following theorems.

\begin{tm}
Let $E/\Q$ be an elliptic curve. Then $E(\Q(\zeta_{16}))_\tors$ is one of the following groups (apart from those in Mazur's theorem):
\[C_{4} \oplus C_{4} \quad \href{https://www.lmfdb.org/EllipticCurve/Q/15a1}{(15a1)}, \qquad C_{2} \oplus C_{10} \quad \href{https://www.lmfdb.org/EllipticCurve/Q/2112bd2}{(2112bd2)}.\]

\end{tm}

\begin{tm} \label{zeta27}
Let $E/\Q$ be an elliptic curve. Then $E(\Q(\zeta_{27}))_\tors$ can be only one of the next two groups (apart from those in Mazur's theorem):
\[C_{3} \oplus C_{3}, \qquad C_{3} \oplus C_{6}, \qquad C_{3} \oplus C_{9}, \qquad C_{21}, \qquad C_{27}.\]
\end{tm}

Additionally, in Lemma \ref{petsedam} we give a description of the set of possible group structures $E(\mathbb{Q}(\zeta_p))_{tors}$ can be isomorphic to for $p=5,7$ and $11$, where $E/\mathbb{Q}$ is an elliptic curve.
\\
We discuss further attempts to classify $E(\mathbb{Q}(\zeta_p))_{tors}$, for arbitrary prime number $p$.
\\
Magma \cite{magma} code used in this paper can be found \href{https://github.com/brutalni-vux/TorsionCyclotomic?fbclid=IwAR32kNBmGbzdULcwWpmXhP5QZ0Ap4QjZf4vqSdbFJ_9dWko6d16W3YtRQpQ}{here.}

\section{Notation and auxiliary results}
Let $E/F$ be an elliptic curve defined over a number field $F$.
There exists an $F$-rational cyclic isogeny $\phi \colon E \to E'$ of degree $n$ if and only if $\langle P \rangle$, where $P \in E(\overline{F})$ is a point of order $n$, is a $\Gal(\overline{F}/F)$-invariant group; in this case we say that $E$ has an $F$-rational $n$-isogeny.
When $F=\mathbb{Q}$, the possible degrees of $n$-isogenies of elliptic curves over $\mathbb{Q}$ are known by the following theorem.

\begin{tm}[Mazur \cite{mazurizog}, Kenku \cite{kenku1}, \cite{kenku2}, \cite{kenku3}, \cite{kenku4}]
\label{Theorem 2.3}
Let $E/\mathbb{Q}$ be an elliptic curve with a rational $n$-isogeny. Then \[ n \in \{ 1,...,19,21, 25,27, 37, 43, 67, 163 \} .\]
There are infinitely many elliptic curves (up to $\overline{\mathbb{Q}}$-isomorphism) with a rational $n$-isogeny over $\mathbb{Q}$ for \[ n \in \{
{1, . . . , 10, 12, 13, 16, 18, 25} \} \] and only finitely many for all the other $n$. If $E$ does not have complex multiplication, then $n \le 18$ with $n \neq 14$ or $n \in \{21, 25, 37 \}.$
\end{tm}

\subsection*{Galois representations}
Let $E/\mathbb{Q}$ be an elliptic curve and let $n$ a positive integer. The field $\mathbb{Q}(E[n])$ is the number field obtained by adjoining to $\mathbb{Q}$ all the $x$ and $y$-coordinates of the points of $E[n]$. The absolute Galois group $\Gal(\overline{\mathbb{Q}}/\mathbb{Q})$ acts on $E[n]$ by its action on the coordinates of the
points, inducing a mod $n$ Galois representation attached to $E$:
\[ \rho_{E,n}: \Gal(\overline{\mathbb{Q}}/\mathbb{Q}) \to \Aut(E[n]) .\]
After we fix a basis for the $n$-torsion, we can identify $\Aut(E[n])$ with $\GL_{2}(\mathbb{Z}/n\mathbb{Z})$.
This means that we can consider $\rho_{E,n}(\Gal(\overline{\mathbb{Q}}/\mathbb{Q}))$ as a subgroup of $\GL_{2}(\mathbb{Z}/n\mathbb{Z})$, uniquely determined up to conjugacy. We shall denote $\rho_{E,n}(\Gal(\overline{\mathbb{Q}}/\mathbb{Q}))$ by $G_{E}(n)$. Moreover, since $\mathbb{Q}(E[n])$ is a Galois extension of $\mathbb{Q}$ and $\ker \rho_{E,n}= \Gal(\overline{\mathbb{Q}}/\mathbb{Q}(E[n]))$, by the first isomorphism theorem we have $G_{E}(n) \cong \Gal(\mathbb{Q}(E[n])/\mathbb{Q})$.

We would like to know what are the possibilities for $G_{E}(n)$ as a subgroup of $\GL(C_{n})$. For some values of $n$, this can be seen in Tables \ref{tableSutherland} and \ref{tableSutherland2}. For most values of $n$ we do not have a list of possibilities of $G_{E}(n)$, but we have a result that helps us see if for a given matrix subgroup $M$ of $\GL(C_{n})$ there exists an elliptic curve $E/\mathbb{Q}$ such that $\rho_{E,n}(\Gal(\overline{\mathbb{Q}}/\mathbb{Q}))=M$ (up to conjugation).
\\
\noindent The following lemma is well-known and it will be useful in cases where we analyze torsion group of elliptic curve $E/\mathbb{Q}$ over a maximal real subfield of $\mathbb{Q}(\zeta_p)$.
\begin{lemma} \label{oddtors}
Let $E/\Q$ be an elliptic curve and $L/K$ a quadratic extension of number fields with $L = K(\sqrt{d})$. Let $E(K)_{(2')}$ be the group of $K$-rational points of $E$ of odd order. Then we have:
\[
E(K(\sqrt{d}))_{(2')} \cong E(K)_{(2')} \oplus E^{d}(K)_{(2')}.
\]
\end{lemma}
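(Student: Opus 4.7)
The plan is the standard decomposition of $E(L)$ into $\sigma$-invariant and $\sigma$-antiinvariant parts, taking advantage of the fact that multiplication by $2$ is invertible on the odd-order torsion. Let $\sigma$ be the nontrivial element of $\Gal(L/K)$, so $\sigma(\sqrt{d})=-\sqrt{d}$. Fix an $L$-isomorphism $\phi\colon E^{d}\to E$ (concretely, if $E$ is given by $y^{2}=f(x)$ and $E^{d}$ by $dy^{2}=f(x)$, take $\phi(x,y)=(x,\sqrt{d}\,y)$), and observe from the formula that $\sigma\circ\phi=-\phi$, so that a point $Q\in E^{d}(L)$ lies in $E^{d}(K)$ if and only if $\phi(Q)\in E(L)$ satisfies $\sigma(\phi(Q))=-\phi(Q)$.

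Next I would define two maps. The first is
\[
\Psi\colon E(L)\longrightarrow E(K)\oplus E^{d}(K),\qquad P\longmapsto\bigl(P+\sigma(P),\,\phi^{-1}(P-\sigma(P))\bigr).
\]
One checks that $P+\sigma(P)$ is $\sigma$-fixed, hence lies in $E(K)$, while $P-\sigma(P)$ is anti-invariant under $\sigma$, so by the observation above $\phi^{-1}(P-\sigma(P))\in E^{d}(K)$. The second is
\[
\Phi\colon E(K)\oplus E^{d}(K)\longrightarrow E(L),\qquad (P,Q)\longmapsto P+\phi(Q).
\]

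A direct computation using $\sigma\circ\phi=-\phi$ gives
\[
\Phi\circ\Psi(P)=2P,\qquad \Psi\circ\Phi(P,Q)=(2P,2Q).
\]
Restrict both maps to the odd-order subgroups $E(L)_{(2')}$ and $E(K)_{(2')}\oplus E^{d}(K)_{(2')}$. Multiplication by $2$ is an automorphism of each of these groups, so $\Psi$ and $\Phi$ are inverse isomorphisms up to this automorphism, and therefore $\Psi$ induces the desired isomorphism
\[
E(L)_{(2')}\ \cong\ E(K)_{(2')}\oplus E^{d}(K)_{(2')}.
\]

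This is entirely routine; the only point requiring any care is the verification $\sigma\circ\phi=-\phi$, which is the bookkeeping that makes the antiinvariant part of $E(L)$ correspond to $K$-rational points on the twist. Note also that the hypothesis that $E$ is defined over $\mathbb{Q}$ is not actually needed — the argument works for any elliptic curve defined over $K$ — but we state it in the form used later in the paper.
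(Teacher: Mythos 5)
Your argument is correct and is the standard proof of this well-known decomposition; the paper itself states the lemma without proof, citing it as well known, so there is nothing to compare beyond noting that yours is the expected argument. One small imprecision: the identity that actually holds on all of $E^{d}(L)$ is $\sigma\circ\phi=-\phi\circ\sigma$, which collapses to $\sigma(\phi(Q))=-\phi(Q)$ only for $Q\in E^{d}(K)$ --- and that is exactly the case in which you invoke it, so the computations $\Phi\circ\Psi=[2]$ and $\Psi\circ\Phi=[2]\oplus[2]$ go through as claimed.
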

Since all cyclotomic extensions are Galois over $\mathbb{Q}$, the following result imposes restrictions on the possibilities for torsion subgroup of $E/\mathbb{Q}$ over a cyclotomic fields.
\begin{lemma} \label{nika}
Let $E/\mathbb{Q}$ be an elliptic curve, $m, n \in \mathbb{N}$ and $K$ a finite Galois extension of $\mathbb{Q}$. Let $E(K)[mn] \cong C_{m}\oplus C_{mn}$ and $P \in E(K)$ point of order $mn$.
Then we have: $$[\mathbb{Q}(mP) : \mathbb{Q}] \mid M(\phi(n), [K : \mathbb{Q}]),$$ where $M(\cdot, \cdot)$ is the greatest common divisor and $\phi$ is the Euler function.
\end{lemma}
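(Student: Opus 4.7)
The plan hinges on extracting from $E(K)[mn]$ a distinguished cyclic subgroup of order $n$ containing $mP$, on which the absolute Galois group acts through scalars. First I would observe that multiplication by $m$ sends $E(K)[mn] \cong C_{m} \oplus C_{mn}$ onto its unique subgroup of order $n$: the $C_m$ factor is killed, and the $C_{mn}$ factor maps onto its order-$n$ subgroup. Hence $mE(K)[mn]$ is cyclic of order exactly $n$. Since $mP$ has order $n$ by hypothesis, it generates this subgroup, so $mE(K)[mn] = \langle mP \rangle$.

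Next I would exploit the Galois hypothesis on $K$. For every $\sigma \in \Gal(\overline{\mathbb{Q}}/\mathbb{Q})$, the point $\sigma(P)$ lies in $E(K)$ because $K/\mathbb{Q}$ is Galois, and it again has order $mn$, so $\sigma(P) \in E(K)[mn]$. Consequently
\[
\sigma(mP) = m\sigma(P) \in mE(K)[mn] = \langle mP \rangle.
\]
Because $\sigma$ is order-preserving, the unique integer $k_{\sigma} \in (\mathbb{Z}/n\mathbb{Z})^{\ast}$ with $\sigma(mP) = k_{\sigma} \cdot mP$ is a unit. The assignment $\sigma \mapsto k_{\sigma}$ is then a group homomorphism $\Gal(\overline{\mathbb{Q}}/\mathbb{Q}) \to (\mathbb{Z}/n\mathbb{Z})^{\ast}$ whose kernel is exactly $\Gal(\overline{\mathbb{Q}}/\mathbb{Q}(mP))$.

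From this, $\mathbb{Q}(mP)/\mathbb{Q}$ is Galois and its Galois group embeds into $(\mathbb{Z}/n\mathbb{Z})^{\ast}$, yielding $[\mathbb{Q}(mP):\mathbb{Q}] \mid \phi(n)$. On the other hand, $mP \in E(K)$ and $K/\mathbb{Q}$ Galois force $\mathbb{Q}(mP) \subseteq K$, hence $[\mathbb{Q}(mP):\mathbb{Q}] \mid [K:\mathbb{Q}]$. Combining the two divisibilities gives the claim. There is no real obstacle in the argument; the only delicate point is the very first step, where the particular structure $C_{m} \oplus C_{mn}$ is needed to guarantee that $mE(K)[mn]$ is cyclic, which in turn is what forces the Galois action on $\langle mP \rangle$ to factor through $(\mathbb{Z}/n\mathbb{Z})^{\ast}$ rather than through the full automorphism group of a larger module.
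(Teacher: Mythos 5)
Your proof is correct, and it reaches the same structural conclusion as the paper --- that $\langle mP\rangle$ is stable under $\Gal(\overline{\mathbb{Q}}/\mathbb{Q})$ and that the action on it is given by a character into $(\mathbb{Z}/n\mathbb{Z})^{\times}$ --- but the way you get there is cleaner and worth contrasting. The paper fixes a basis $\{P,Q\}$ of $E[mn]$, writes $P^{\sigma}=\alpha P+\beta Q$, and uses the hypothesis $E(K)[mn]\cong C_m\oplus C_{mn}$ to force $m\beta\equiv 0\pmod{mn}$ (since otherwise $\langle P,\beta Q\rangle$ would be too large to sit inside $C_m\oplus C_{mn}$); it then packages the conclusion as an upper-triangular shape for the mod-$n$ representation and counts via orbit--stabilizer. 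You avoid the choice of basis entirely by observing that $mE(K)[mn]$ is cyclic of order $n$ and equal to $\langle mP\rangle$, and that Galois stability of $E(K)[mn]$ (from $K/\mathbb{Q}$ Galois) immediately transports to its image under multiplication by $m$; the divisibility $[\mathbb{Q}(mP):\mathbb{Q}]\mid\phi(n)$ then falls out of the first isomorphism theorem applied to $\sigma\mapsto k_{\sigma}$ rather than an orbit count. The two arguments are logically equivalent, but yours isolates more transparently the one place where the hypothesis on the structure of $E(K)[mn]$ is actually used, and it sidesteps the paper's slightly terse step ``$\beta Q\in E(K)\Rightarrow m\beta\equiv 0\pmod{mn}$''. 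No gaps.
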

\begin{proof}
Let $P$ be a point of order $mn$ with coordinates in $K$. Then we can take $Q \in E[mn]$ such that $\{P, Q\}$ is a basis for $E[mn]$. Consider the Galois representation modulo $n$ with respect to $E$:
    $$\rho : \Gal(\overline{\mathbb{Q}}/\mathbb{Q}) \to \GL_{2}(\mathbb{Z}/n\mathbb{Z}).$$
    Let $\sigma \in \Gal(K/\mathbb{Q})$. Then we have $P^{\sigma} = \alpha P + \beta Q$ for some $\alpha, \beta \in C_{mn}$ because the action of $\sigma$ on $P$ preserves the order of a point.

\noindent Now we have $P^{\sigma} - \alpha P = \beta Q$, so $\beta Q \in E(K)$. From that follows $m\beta \equiv 0 \pmod{mn}$. Multiplying by $m$ gives us $(mP)^{\sigma} = \alpha (mP)$ so $(mP)^{\sigma} \in \langle mP \rangle$ for all $\sigma \in \Gal(K/\mathbb{Q})$. Because of preserving the order, $\alpha$ has to be in $(\mathbb{Z}/n\mathbb{Z})^{\times}$.

\noindent Since by considering the restriction map we get $\Gal(K/\mathbb{Q}) \cong \Gal(\overline{\mathbb{Q}}/\mathbb{Q})/\Gal(\overline{\mathbb{Q}}/K)$, we have $(mP)^{\sigma} \in \langle mP \rangle$ for all $\sigma \in \Gal(\overline{\mathbb{Q}}/\mathbb{Q})$. Therefore, for all $\sigma \in \Gal(\overline{\mathbb{Q}}/\mathbb{Q})$:
    $$\rho(\sigma) =
    \begin{pmatrix} 
    \varphi(\sigma) & \tau(\sigma) \\
    0 & \psi(\sigma) 
    \end{pmatrix},$$
    where $\varphi, \psi, \tau : \Gal(\overline{\mathbb{Q}}/\mathbb{Q}) \rightarrow C_{n}$, and $\varphi, \psi$ are homomorphisms with image in $(C_{n})^{\times}$.
    We know that $(mP)^{\sigma} = g(mP) \Leftrightarrow \varphi(\sigma) = g$, for all $\sigma \in \Gal(\overline{\mathbb{Q}}/\mathbb{Q})$. Therefore, we have: $$|Im(\varphi)| = |\{(mP)^{\sigma} : \sigma \in \Gal(K/\mathbb{Q})\}| = |Orb(mP)|.$$

\noindent It is clear that Stab$(mP) = \Gal(K/\mathbb{Q}(mP))$, so by orbit and stabilizer theorem we have:
    $$|Im(\varphi)| = \frac{|\Gal(K/\mathbb{Q})|}{|\Gal(K/\mathbb{Q}(mP))|} = [\mathbb{Q}(mP) : \mathbb{Q}].$$
    On the other hand, we have $Im(\varphi) \leq (\mathbb{Z}/n\mathbb{Z})^{\times}$, so we have: $$[\mathbb{Q}(mP) : \mathbb{Q}] \mid \phi(n).$$ $[\mathbb{Q}(mP) : \mathbb{Q}] \mid [K : \mathbb{Q}]$ is obvious and the proof is complete.
\end{proof}
One of the crucial results that we will need is the main result from \cite{chou}:
\begin{tm} \label{eva}
Let $E/\mathbb{Q}$ be a rational elliptic curve. Then $E(\mathbb{Q}^{ab})_{tors}$ is isomorphic to the one of the following groups:
    \begin{gather*}
    C_{m}, \quad m = 1, 3, 5, 7, 9, 11, 13, 15, 17, 19, 21, 25, 27, 37, 43, 67, 163\\
    C_{2} \oplus C_{2m}, \quad m = 1, 2, \dots, 8, 9\\
    C_{3} \oplus C_{3m}, \quad m = 1, 3\\
    C_{4} \oplus C_{4m}, \quad m = 1, 2, 3, 4\\
    C_{5} \oplus  C_{5},\\
    C_{6} \oplus C_{6},\\
     C_{8} \oplus  C_{8}.
\end{gather*}
\end{tm}

\noindent This means that all of our candidate torsion subgroups are the subgroups of the groups in the above list. Our approach will mainly consist of eliminating a certain set of possibilities from the list above in order to classify torsion groups of elliptic curves over a specific cyclotomic field.

\section{Torsion growth over \texorpdfstring{$\mathbb{Q}(\zeta_{16})$}{Q16}}

\noindent Assume that $E/\mathbb{Q}$ is an elliptic curve and that $C_{m} \oplus C_{mn} \subseteq E(\mathbb{Q}(\zeta_{16}))_{tors}$. By the properties of the Weil pairing, we have $\mathbb{Q}(\zeta_{m}) \subseteq \mathbb{Q}(\zeta_{16})$. It follows that $m \in \{1,2,4,8 \}.$ We first eliminate a certain amount of cyclic groups listed in Theorem \ref{eva}.

\begin{lemma}
Let $E/\Q$ be an elliptic curve. Then $E(\Q(\zeta_{16}))_\tors$ is not isomorphic to $C_{n}$ if \[n \in \{11, 14, 17, 18, 19, 21, 25, 27, 37, 43, 67, 163 \}.\]
\end{lemma}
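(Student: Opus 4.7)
The plan is to use Lemma \ref{nika} with $m=1$ to force a generator of a hypothetical $C_n \cong E(\mathbb{Q}(\zeta_{16}))_{tors}$ to be defined over a small-degree subfield of $\mathbb{Q}(\zeta_{16})$, and then to invoke the known classifications of $\Phi_{\mathbb{Q}}(d)$ for $d \le 4$ to obtain contradictions. Throughout, $[\mathbb{Q}(\zeta_{16}):\mathbb{Q}] = 8$ and $\Gal(\mathbb{Q}(\zeta_{16})/\mathbb{Q}) \cong (\mathbb{Z}/16\mathbb{Z})^\times \cong C_{2} \times C_{4}$.

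Suppose for contradiction that $E(\mathbb{Q}(\zeta_{16}))_{tors} \cong C_n$ for one of the listed $n$, and let $P$ be a generator. Lemma \ref{nika} (with $m=1$, $K = \mathbb{Q}(\zeta_{16})$) gives
\[
[\mathbb{Q}(P):\mathbb{Q}] \,\Big|\, \gcd(\phi(n), 8).
\]
A direct computation yields $\gcd(\phi(n), 8) = 2$ for $n \in \{11, 14, 18, 19, 27, 43, 67, 163\}$, $\gcd(\phi(n), 8) = 4$ for $n \in \{21, 25, 37\}$, and $\gcd(\phi(17), 8) = 8$.

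For the first collection of values, $P$ lies in a subfield $F \subseteq \mathbb{Q}(\zeta_{16})$ with $[F:\mathbb{Q}] \le 2$, so $C_n \hookrightarrow E(F)_{tors}$. None of these $C_n$ appear in Mazur's theorem (Theorem \ref{Theorem 2.1}) nor in Najman's quadratic classification (Theorem \ref{najmanquadratic}), a contradiction. For $n \in \{21, 25, 37\}$, the same reasoning puts $P$ in a subfield of degree at most $4$, and the classification of $\Phi_{\mathbb{Q}}(4)$ due to Chou and Gonz\'alez-Jim\'enez–Najman excludes $C_{21}, C_{25}, C_{37}$.

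The case $n = 17$ is the main obstacle, since the raw bound of $8$ is vacuous. Here the key refinement, already implicit in the proof of Lemma \ref{nika}, is that the character $\varphi$ induced on $\Gal(\mathbb{Q}(P)/\mathbb{Q})$ injects into $(\mathbb{Z}/17\mathbb{Z})^\times \cong C_{16}$, so $\Gal(\mathbb{Q}(P)/\mathbb{Q})$ must be cyclic. However $\Gal(\mathbb{Q}(\zeta_{16})/\mathbb{Q}) \cong C_{2} \times C_{4}$ has exponent $4$, so its cyclic quotients have order at most $4$. Hence $[\mathbb{Q}(P):\mathbb{Q}] \le 4$, and we conclude once more from the classification of $\Phi_{\mathbb{Q}}(d)$ for $d \le 4$, which does not contain $C_{17}$.
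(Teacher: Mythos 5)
Your proof is correct. For every $n \neq 17$ it is essentially identical to the paper's argument: Lemma \ref{nika} with $m=1$ gives $[\mathbb{Q}(P):\mathbb{Q}] \mid \gcd(\phi(n),8)$, which is $2$ for $n \in \{11,14,18,19,27,43,67,163\}$ and $4$ for $n \in \{21,25,37\}$, and the classifications of $\Phi_{\mathbb{Q}}(2)$ and $\Phi_{\mathbb{Q}}(4)$ finish those cases. For $n=17$ you and the paper exploit the same structural fact --- the action on $\langle P_{17}\rangle$ factors through a subgroup of the cyclic group $(\mathbb{Z}/17\mathbb{Z})^{\times}$, whereas $\Gal(\mathbb{Q}(\zeta_{16})/\mathbb{Q}) \cong C_2 \times C_4$ is not cyclic --- but package it differently. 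The paper first uses \cite[Theorem 5.8]{growth} to force $\mathbb{Q}(P_{17})=\mathbb{Q}(\zeta_{16})$, so that all eight automorphisms move $P_{17}$ differently, and then gets a contradiction because the four involutions of $C_2\times C_4$ would produce four distinct solutions of $k^2 \equiv 1 \pmod{17}$. You instead extract from the \emph{proof} of Lemma \ref{nika} (correctly, though this goes beyond its stated conclusion) that $\Gal(\mathbb{Q}(P_{17})/\mathbb{Q}) \cong \mathrm{Im}(\varphi)$ is cyclic, hence a cyclic quotient of $C_2\times C_4$ and so of order dividing the exponent $4$, and then quote $C_{17}\notin \Phi_{\mathbb{Q}}(4)$. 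Your route avoids the external input of Theorem 5.8 of \cite{growth} at the cost of leaning on the degree-$4$ classification (which the paper invokes anyway for $n\in\{21,25,37\}$) and on unpacking the proof of Lemma \ref{nika}; both are valid, and yours is arguably the cleaner uniform statement of the underlying obstruction.
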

\begin{proof}
Lemma \ref{nika} gives us that if $P_{n}$ is a point of order $n  \not\in \{17, 21, 25, 37 \}$, we have $[\Q(P_{n}) : \Q] \mid 2$, which is impossible by Theorem \ref{najmanquadratic}. By the same lemma we get that if $P_{n}$ is a point of order $n  \in \{21, 25, 37 \}$, then we have $[\Q(P_{n}) : \Q] \mid 4$, which is impossible by \cite[Theorem 1.4]{chou2}.
\\
It remains to consider the case $n=17$. By \cite[Theorem 5.8]{growth} we conclude that the point $P_{17}$ of order $17$ cannot be defined over some strictly smaller subfield of $\Q(\zeta_{16})$. That means that all $\sigma \in \Gal(\Q(\zeta_{16}) / \Q)$ act differently on $P_{17}$. Since $\Gal(\Q(\zeta_{16}))$ has four elements $\sigma$ such that $\sigma^2 = id$, we have that $P_{17}^{\sigma^2} = k^2P_{17} = P_{17}$ for four different $\sigma$. That means that we have $k^2 \equiv 1 \pmod{17}$ for four different $k$, a contradiction.
\end{proof}

After eliminating plenty of cyclic groups, we discuss the cases when $E$ obtains full $2$-torsion over $\mathbb{Q}(\zeta_{16}).$ This is done by the following lemmas:

\begin{lemma}
Let $E/\Q$ be an elliptic curve. Then $E(\Q(\zeta_{16}))_\tors$ is not isomorphic to $C_{2} \oplus C_{14}$ or $C_{2} \oplus C_{18}$.
\end{lemma}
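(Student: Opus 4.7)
The plan is to dispatch both cases $C_{2} \oplus C_{14}$ and $C_{2} \oplus C_{18}$ by a single three-step argument, writing the forbidden group uniformly as $C_{2} \oplus C_{2q}$ with $q \in \{7, 9\}$. I would suppose for contradiction that $E(\Q(\zeta_{16}))_\tors$ contains such a subgroup, and pick a point $P$ of order $2q$.

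The first step is to apply Lemma \ref{nika} with $m = 2$ and $n = q$: since $\phi(7) = \phi(9) = 6$ and $[\Q(\zeta_{16}) : \Q] = 8$, it gives $[\Q(2P) : \Q] \mid \gcd(6, 8) = 2$. So the point $2P$ of order $q$ is defined over some field $L_{1} \subseteq \Q(\zeta_{16})$ with $[L_{1} : \Q] \le 2$; that is, $L_{1} \in \{\Q, \Q(i), \Q(\sqrt{2}), \Q(\sqrt{-2})\}$. For the second step, I would note that the image of the mod-$2$ Galois representation sits inside $\GL_{2}(\mathbb{Z}/2\mathbb{Z})$, a group of order $6$, so $[\Q(E[2]) : \Q] \in \{1, 2, 3, 6\}$; combined with the fact that $\Q(E[2]) \subseteq \Q(\zeta_{16})$ forces the degree to also divide $8$, we obtain $[\Q(E[2]) : \Q] \in \{1, 2\}$. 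Set $L_{2} := \Q(E[2])$.

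The third step is a case split on the compositum $L_{1} L_{2} \subseteq \Q(\zeta_{16})$. If $[L_{1} L_{2} : \Q] \le 2$, then $E(L_{1} L_{2})_\tors \supseteq C_{2} \oplus C_{2q}$, which contradicts Mazur's Theorem \ref{Theorem 2.1} (when the degree is $1$) or Najman's Theorem \ref{najmanquadratic} (when the degree is $2$), since neither $C_{2} \oplus C_{14}$ nor $C_{2} \oplus C_{18}$ appears in those lists. Otherwise $L_{1}$ and $L_{2}$ are two distinct quadratic subfields of $\Q(\zeta_{16})$, and since $\Q(\zeta_{16})$ has a unique biquadratic subfield we must have $L_{1} L_{2} = \Q(\zeta_{8})$. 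Hence $E(\Q(\zeta_{8}))_\tors \supseteq C_{2} \oplus C_{2q}$, and the proof will conclude by invoking the classification of $\Phi_{\Q}(4)$.

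The main obstacle is precisely this last reduction: one needs to know that neither $C_{2} \oplus C_{14}$ nor $C_{2} \oplus C_{18}$ lies in $\Phi_{\Q}(4)$. This is the non-elementary input and is supplied by Chou's classification \cite[Theorem 1.4]{chou2} together with the work of Gonz\'alez-Jim\'enez and Najman \cite{growth}, whose proofs rest on explicit computations on the relevant modular curves; the Galois-theoretic reductions in the first two steps are the easy part.
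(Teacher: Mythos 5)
Your proposal is correct and follows essentially the same route as the paper: apply Lemma \ref{nika} to get $[\Q(2P):\Q]\mid 2$, observe $[\Q(E[2]):\Q]\in\{1,2\}$ inside $\Q(\zeta_{16})$, and conclude that $C_2\oplus C_{2q}$ already lives over a subfield of degree at most $4$, contradicting Theorem \ref{najmanquadratic} and \cite[Theorem 1.4]{chou2}. Your explicit identification of the degree-$4$ field as $\Q(\zeta_8)$ is a minor refinement the paper does not bother with, but the substance is identical.
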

\begin{proof}
\noindent We will prove the result for $C_{2} \oplus C_{14}$ and the proof for the case $C_{2} \oplus C_{18}$ is identical.
\\
\noindent Let $P_{14} \in E(\Q(\zeta_{16}))$ be the point of order $14$. From Lemma \ref{nika} we get that $[\mathbb{Q}(2P_{14}) : \mathbb{Q}] \mid 2$. It is also well-known that $[\mathbb{Q}(E[2]) : \mathbb{Q}] \in \{1,2,3,6\}$. Since $E[2]$ is defined over $\Q(\zeta_{16})$, we have $[\mathbb{Q}(E[2]) : \mathbb{Q}] \in \{1,2\}$.
\\
\noindent Let $Q_{2}$ be a point of order $2$ different from $7P_{14}$. We now have $[\mathbb{Q}(2P_{14}, 7P_{14}, Q_{2}) : \mathbb{Q}] \mid 4$. Since $2P_{14}, 7P_{14}$ and $Q_{2}$ generate our torsion subgroup $C_{2} \oplus C_{14}$, we now know that this torsion subgroup appears over some strictly smaller subfield of $\Q(\zeta_{16})$.
\\
Now we get a contradiction by using Theorem \ref{najmanquadratic} and \cite[Theorem 1.4]{chou2}.
\end{proof}

\begin{lemma}
Let $E/\Q$ be an elliptic curve. Then $E(\Q(\zeta_{16}))_\tors$ is not isomorphic to $C_{15}$,  $C_{2} \oplus C_{12}$, $C_{4} \oplus C_{12}$, $C_{4} \oplus C_{8}$, $C_{4} \oplus C_{16}$ or $C_{8} \oplus C_{8}$.
\end{lemma}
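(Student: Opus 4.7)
The plan is to treat each of the six groups in turn, combining Lemma \ref{nika}, Theorem \ref{Theorem 2.3}, the classification of $\Phi_{\Q}(d)$ for small $d$, and explicit Magma computation. A recurring observation is that $\Gal(\Q(\zeta_{16})/\Q) \cong C_2 \times C_4$ has order $8$, so every subfield of $\Q(\zeta_{16})$ has degree $1$, $2$, $4$, or $8$ over $\Q$.

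For $C_{15}$, a point $P$ of order $15$ in $E(\Q(\zeta_{16}))_{\tors}$ generates the unique cyclic subgroup of order $15$ in the torsion, so $\langle P \rangle$ is stable under $\Gal(\Q(\zeta_{16})/\Q)$ and hence under $\Gal(\overline{\Q}/\Q)$. Thus $E$ admits a $\Q$-rational $15$-isogeny, and by Theorem \ref{Theorem 2.3} only finitely many $\overline{\Q}$-isomorphism classes of such $E$ exist. Each representative can be enumerated (e.g.\ from LMFDB) and the torsion over $\Q(\zeta_{16})$ computed directly in Magma.

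For $C_2 \oplus C_{12}$, $C_4 \oplus C_8$, and $C_4 \oplus C_{12}$, Lemma \ref{nika} applied to a generator $P$ of the largest cyclic factor forces a suitable multiple $mP$ into a subfield of $\Q(\zeta_{16})$ of degree at most $2$. Combined with $[\Q(E[2]):\Q] \in \{1,2\}$ (since this degree divides $8$ and lies in $\{1,2,3,6\}$), one exhibits a torsion subgroup of shape $C_2 \oplus C_6$, $C_4 \oplus C_4$, or $C_4 \oplus C_6$, respectively, over a subfield $L \subseteq \Q(\zeta_{16})$ of degree dividing $4$. Theorem \ref{najmanquadratic} together with \cite[Theorem 1.4]{chou2} then either contradicts this outright or pins down a short list of $E/\Q$ which can be eliminated one by one with Magma.

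The hardest cases are $C_4 \oplus C_{16}$ and $C_8 \oplus C_8$, since both require $E[8] \subseteq E(\Q(\zeta_{16}))$. As $|\Gal(\Q(\zeta_{16})/\Q)| = 8$, the mod-$8$ image $G_E(8) \subseteq \GL_2(\Z/8\Z)$ has order at most $8$, while its determinant must surject onto $(\Z/8\Z)^\times$ through the cyclotomic character. I would cross-reference Sutherland's tables, recalled earlier in the paper, to rule out all such images for rational elliptic curves. For $C_4 \oplus C_{16}$ a further application of Lemma \ref{nika} with $m=4$, $n=4$ forces the $4$-multiple of the order-$16$ point into a quadratic subfield, and combining this with the mod-$8$ constraint should either yield a direct contradiction or reduce to a finite computational check. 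I expect this last case to be the main technical obstacle, as the interplay between the Galois image and the precise subfield of definition of the order-$16$ point leaves the least room for purely structural arguments.
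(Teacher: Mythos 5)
Your strategy diverges from the paper's, which disposes of all six groups by direct computation on genus-one modular curves: $X_1(15)$, $X_1(2,12)$ and $X_1(4,8)$ are elliptic curves, one checks in Magma that their Mordell--Weil groups over $\Q(\zeta_{16})$ contain only cusps, and the cases $C_4\oplus C_{12}$, $C_4\oplus C_{16}$, $C_8\oplus C_8$ follow because each contains $C_2\oplus C_{12}$ or $C_4\oplus C_8$. Measured against that, your sketch has genuine gaps, the most serious being the middle block. For $C_2\oplus C_{12}$, Lemma \ref{nika} (with $m=2$, $n=6$) only places the point $2P$ of order $6$ in a subfield of degree at most $2$; it says nothing about the point of order $4$, which is exactly where the obstruction lives. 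What you exhibit over a field of degree dividing $4$ is therefore $C_2\oplus C_6$, and likewise $C_4\oplus C_4$ in the $C_4\oplus C_8$ case. Neither is a contradiction ($C_2\oplus C_6$ lies in $\Phi(1)$ and $C_4\oplus C_4$ in $\Phi_{\Q}(2)$), and each is realized by an \emph{infinite} family of curves, so there is no ``short list'' to eliminate one by one in Magma; moreover $C_2\oplus C_{12}$ itself belongs to $\Phi_{\Q}(2)$ by Theorem \ref{najmanquadratic}, so even pushing the whole group into a quadratic field would not finish the argument (the paper needs the field-specific fact that $C_2\oplus C_{12}$ does not occur over $\Q(i)$).

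Two further problems. For $C_{15}$, Theorem \ref{Theorem 2.3} gives finitely many curves only up to $\overline{\Q}$-isomorphism, and torsion over a fixed number field is not a twist invariant: a quadratic twist of a listed representative could acquire a point of order $15$ over $\Q(\zeta_{16})$ even though the representative does not. You would need a twist-invariant check (division polynomials) or the modular curve $X_1(15)$ as in the paper. For the last two groups, $C_4\oplus C_{16}$ does \emph{not} force $E[8]\subseteq E(\Q(\zeta_{16}))$ --- its $8$-torsion is only $C_4\oplus C_8$ --- and Tables \ref{tableSutherland} and \ref{tableSutherland2} record mod-$p$ images for primes $p$ only, so there is nothing in the paper to cross-reference against a mod-$8$ image; that route would require the classification of $2$-adic images, which is outside the toolkit the paper assembles. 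None of these gaps is unfixable in principle, but as written the proposal does not close the cases $C_2\oplus C_{12}$, $C_4\oplus C_{12}$, $C_4\oplus C_8$, $C_4\oplus C_{16}$ or $C_8\oplus C_8$.
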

\begin{proof}
\noindent Both $X_1(15)$ and $X_1(2,12)$ are elliptic curves. A computation in Magma shows that $X_1(15)(\Q(\zeta_{16}))$ and $X_1(15)(\Q)$ have the same Mordell-Weil group structure. Therefore, since $C_{15} \not\in \Phi(1)$, it also cannot appear over $\Q(\zeta_{16})$.
\\
 The curves $X_1(2, 12)(\Q(\zeta_{16}))$ and $X_1(2, 12)(\Q(i))$ have the same Mordell-Weil group structure. It was proven in \cite[Lemma 7]{najCyclo} that $C_{2} \oplus C_{12}$ does not appear as a torsion subgroup over $\Q(i)$. Therefore, it also cannot appear over $\Q(\zeta_{16})$. This also covers the case $C_{4} \oplus C_{12}$.
 \\
 \noindent We consider the modular curves $X_1(4, 8)(\Q(\zeta_{16}))$ and $X_1(4, 8)(\Q(\zeta_{8}))$ which are actually elliptic curves. A computation in Magma shows that $X_1(4, 8)(\Q(\zeta_{16}))$ has rank $0$ and the same torsion as $X_1(4, 8)(\Q(\zeta_{8}))$, which contains only cusps, see \cite[Case 6.11]{najman_bruin}. This also covers the cases $C_{8} \oplus C_{8}$ and $C_{4} \oplus C_{16}$.
\end{proof}

The following lemma is a bit more complicated than the previous ones. The idea is to consider the corresponding modular curve $X_{1}(16)$ and its Jacobian $J_{1}(16)$ over some cyclotomic fields in order to determine that the Jacobian has rank $0$. After that, we determine torsion of $J_{1}(\mathbb{Q}(\zeta_{16}))$ and consequently the number of points on $X_{1}(16)(\mathbb{Q}(\zeta_{16}))$, all of which turn out to be cusps.

\begin{lemma}
Let $E/\Q$ be an elliptic curve. Then $E(\Q(\zeta_{16}))_\tors$ is not isomorphic to $C_{16}$, $C_{2} \oplus C_{16}$ or $C_{13}$.
\end{lemma}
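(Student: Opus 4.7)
The plan is to reduce each of the three forbidden torsion structures $C_{16}$, $C_{2} \oplus C_{16}$ and $C_{13}$ to the assertion that a suitable modular curve has no non-cuspidal $\Q(\zeta_{16})$-rational point. A $\Q(\zeta_{16})$-rational point of order $16$ on $E/\Q$ produces a non-cuspidal $\Q(\zeta_{16})$-point on $X_{1}(16)$ (this handles both $C_{16}$ and $C_{2} \oplus C_{16}$ simultaneously), and similarly a point of order $13$ produces a non-cuspidal $\Q(\zeta_{16})$-point on $X_{1}(13)$. Thus it suffices to show
\[
X_{1}(16)(\Q(\zeta_{16})) = \{\text{cusps}\}, \qquad X_{1}(13)(\Q(\zeta_{16})) = \{\text{cusps}\}.
\]
Both curves have genus $2$, so one cannot read off their rational points from the genus alone, and we must pass to the Jacobians.

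I would then follow the standard Jacobian recipe, exactly as hinted in the preamble of the lemma. First, show that $J_{1}(13)$ and $J_{1}(16)$ have Mordell--Weil rank $0$ over $\Q(\zeta_{16})$. The natural route is to use the $\Q$-isogeny decomposition of $J_{1}(N)$ into its simple factors and to decompose $J_{1}(N)(\Q(\zeta_{16})) \otimes \mathbb{Q}$ into $\chi$-isotypic pieces for the characters $\chi$ of $\Gal(\Q(\zeta_{16})/\Q) \cong (\mathbb{Z}/16\mathbb{Z})^{\times}$; each isotypic component corresponds to a twist of a factor of $J_{1}(N)$, whose rank can be bounded over $\Q$ by descent or analytic rank considerations available in Magma. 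Second, with the rank pinned to zero, compute $J_{1}(N)(\Q(\zeta_{16}))_{\tors}$ explicitly in Magma by reducing modulo several primes of good reduction in $\Q(\zeta_{16})$ and intersecting the resulting injections on torsion.

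Once $J_{1}(N)(\Q(\zeta_{16}))$ is known as an explicit finite abelian group, fix a cuspidal base point and use Abel--Jacobi to embed $X_{1}(N)(\Q(\zeta_{16}))$ into this finite group. That produces a finite, enumerable list of candidate $\Q(\zeta_{16})$-points on $X_{1}(N)$, and verifying in Magma that each candidate is in fact a cusp finishes the proof. Since no non-cuspidal $\Q(\zeta_{16})$-point exists on $X_{1}(13)$ or on $X_{1}(16)$, none of $C_{13}$, $C_{16}$, $C_{2}\oplus C_{16}$ can occur as $E(\Q(\zeta_{16}))_{\tors}$.

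The main obstacle I anticipate is establishing that $J_{1}(16)$ has rank $0$ over the degree-$8$ field $\Q(\zeta_{16})$. Running a direct descent on a two-dimensional abelian variety over a degree-$8$ number field is computationally heavy, so in practice one has to work factor-by-factor through the $\Q$-isogeny decomposition and control each twist appearing in the $\chi$-isotypic decomposition separately. The analogous statement for the simple two-dimensional $J_{1}(13)$ is best approached by first treating the real subfield $\Q(\zeta_{16})^{+}$ (where rank bounds from the literature can be invoked) and then passing up the quadratic extension $\Q(\zeta_{16})/\Q(\zeta_{16})^{+}$, using that the only new contribution comes from a single quadratic twist. By contrast, the torsion and Abel--Jacobi steps are comparatively routine once the ranks are known.
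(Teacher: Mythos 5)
Your proposal follows essentially the same route as the paper: reduce each group to the existence of a non-cuspidal $\Q(\zeta_{16})$-point on $X_{1}(16)$ or $X_{1}(13)$, prove the Jacobian has rank $0$ over $\Q(\zeta_{16})$ by decomposing the Mordell--Weil rank over a subextension into twist contributions (the paper uses the single quadratic step $\Q(\zeta_{16})=\Q(\zeta_{8})(\sqrt{\zeta_{8}})$ rather than the full isotypic decomposition), pin down the torsion by reduction modulo primes of good reduction via Katz's injectivity, and enumerate the finitely many points via their Mumford representations to check they are all cusps. The paper's computations confirm your plan: $J_{1}(16)(\Q(\zeta_{16}))$ has order $400$, $J_{1}(13)(\Q(\zeta_{16}))$ has order dividing $19$ (using primes above $17$ and $41$), and in both cases only cusps appear.
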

\begin{proof}
\noindent We consider the modular curve $X_1(16)(\Q(\zeta_{16}))$ and its Jacobian $J_1(16)(\Q(\zeta_{16}))$. We will demonstrate the use of standard methods for determining points on $X_1(16)(\Q(\zeta_{16}))$.
\\
A computation in Magma shows that $r(J_1(16)(\Q(\zeta_{16}))) = 0$. Since
\[r(J_1(16)(\Q(\zeta_{8})(\sqrt{\zeta_{8}}))) = r(J_1(16)(\Q(\zeta_{8}))) + r(J_1^{\zeta_{8}}(16)(\Q(\zeta_{8}))),\]
 the computation of the rank becomes shorter and we obtain that the rank of our Jacobian is $0$.
\\
Now we determine $J_1(16)(\Q(\zeta_{16}))_{tors}$. Rational prime $p = 17$ splits completely in $\Q(\zeta_{16})$ so by reducing modulo some $\mathfrak p$ that lies above $p$ we get an injection
\[red_{\mathfrak p}:J_1(16)(\Q(\zeta_{16}))_{tors}\to J_1(16)(\F_{17}).\]
This map is injective due to the result of Katz \cite{katz}.
\\
A computation in Magma shows that $|J_1(16)(\F_{17})| = 400.$ It follows that $|J_1(16)(\Q(\zeta_{16}))| \le 400$. By using the generators of the $2$-torsion subgroup of $J_1(16)(\Q(\zeta_{16}))$ and some elements of $J_1(16)(\Q(\zeta_{16}))$ that we get from some known points on $X_1(16)(\Q(\zeta_{16}))$, we are able to generate a group with $400$ elements. Therefore, we know exactly how $J_1(16)(\Q(\zeta_{16}))$ looks like.
\\
Now we are able to determine all points on $X_1(16)(\Q(\zeta_{16}))$ by considering the Mumford representations of the elements of $J_1(16)(\Q(\zeta_{16}))$. We easily get that $|X_1(16)(\Q(\zeta_{16}))| = 14$ with all points being cusps.
Therefore, we can conclude that there are no elliptic curves $E/\Q(\zeta_{16})$ (and consequently $E/\Q$) with a point of order $16$ over $\Q(\zeta_{16})$.
\\It remains to show that $E(\Q(\zeta_{16}))_\tors$ can't be $C_{13}$.
We consider the modular curve $X_1(13)(\Q(\zeta_{16}))$ and its Jacobian $J_1(13)(\Q(\zeta_{16}))$.

\noindent A computation in Magma shows that $r(J_1(13)(\Q(\zeta_{16}))) = 0$. As in the previous lemma, we obtain:
\[r(J_1(13)(\Q(\zeta_{16}))) = r(J_1(16)(\Q(\zeta_{8}))) + r(J_1^{\zeta_{8}}(16)(\Q(\zeta_{8}))) = 0.\]

The next step is to determine $J_1(13)(\Q(\zeta_{16}))_{tors}$. We determine the two-torsion subgroup, which turns out to be trivial. Using the result of Katz \cite{katz}, we get an injection:
\[red_{\mathfrak p}:J_1(13)(\Q(\zeta_{16}))_{tors}\to J_1(13)(\F_{17}).\]
We also get that rational prime $q = 41$ has inertia degree $2$ in $\Q(\zeta_{16})$ so we have another injection:
\[red_{\mathfrak q}:J_1(13)(\Q(\zeta_{16}))_{tors}\to J_1(13)(\F_{41^2}).\]
We notice that $gcd(\#J_1(13)(\F_{17}), \#J_1(13)(\F_{41^2})) = 76$, so $\#J_1(13)(\Q(\zeta_{16})) \mid 76$.
\\
Since the two torsion subgroup is trivial, we get that $\#J_1(13)(\Q(\zeta_{16})) \mid 19$. We can find a point of order $19$ on our Jacobian. By checking the Mumford representations of those divisors, we find that all of the points on the Jacobian come from cusps on $X_1(13)(\Q(\zeta_{16}))$ (and actually $X_1(13)(\Q))$.
\\
\noindent Therefore, we can conclude that there are no elliptic curves $E/\Q(\zeta_{16})$ (and consequently $E/\Q$) such that $E(\Q(\zeta_{16}))_\tors \cong C_{13}$.
\end{proof}
\section{Torsion growth over \texorpdfstring{$\mathbb{Q}(\zeta_{27})$}{Q27}}
In this section we prove Theorem \ref{zeta27} using a series of lemmas. First we eliminate some possibilities for a cyclic group to appear as the subgroup of $E(\mathbb{Q}(\zeta_{27}))$. \noindent Assume that $E/\mathbb{Q}$ is an elliptic curve and that $C_{m} \oplus C_{mn} \subseteq E(\mathbb{Q}(\zeta_{27}))_{tors}$. By the properties of the Weil pairing and taking the Theorem \ref{eva} into account, we have $\mathbb{Q}(\zeta_{m}) \subseteq \mathbb{Q}(\zeta_{27})$. It follows that $m \in \{1,2,3,6 \}.$ 

\begin{lemma}
Let $E/\Q$ be an elliptic curve. Then $E(\Q(\zeta_{27}))_{tors}$ is not isomorphic to $C_{n}$ if $n \in \{11, 13, 14, 15, 16, 17, 19,25, 37,43, 67, 163\}$.
\end{lemma}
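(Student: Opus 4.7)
The plan is to apply Lemma \ref{nika} to each $n$ and thereby bound the degree of the field $\mathbb{Q}(P_n)$, where $P_n$ is a generator of the cyclic $n$-torsion. With $K = \mathbb{Q}(\zeta_{27})$ of degree $18$, Lemma \ref{nika} (taken with $m = 1$) yields
\[
[\mathbb{Q}(P_n):\mathbb{Q}] \mid \gcd(\phi(n), 18).
\]
Since $\Gal(\mathbb{Q}(\zeta_{27})/\mathbb{Q})$ is cyclic of order $18$, the subfields of $\mathbb{Q}(\zeta_{27})$ have degrees exactly $1, 2, 3, 6, 9, 18$, namely $\mathbb{Q}$, $\mathbb{Q}(\sqrt{-3})$, $\mathbb{Q}(\zeta_9)^+$, $\mathbb{Q}(\zeta_9)$, $\mathbb{Q}(\zeta_{27})^+$, $\mathbb{Q}(\zeta_{27})$, so the degree bound pins $\mathbb{Q}(P_n)$ to a short, explicit list at each $n$.

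For $n \in \{11, 15, 16, 17, 25\}$ the bound is $[\mathbb{Q}(P_n):\mathbb{Q}] \mid 2$, so $\mathbb{Q}(P_n) \in \{\mathbb{Q}, \mathbb{Q}(\sqrt{-3})\}$. Mazur's theorem together with Theorem \ref{najmanquadratic} eliminates $C_{11}, C_{17}, C_{25}$ immediately, and also $C_{15}$ since $\mathbb{Q}(\sqrt{-3})$ is neither $\mathbb{Q}(\sqrt{5})$ nor $\mathbb{Q}(\sqrt{-15})$. For $C_{16}$, the slightly more delicate case, such a curve would have a rational $16$-isogeny with Galois character on $\langle P_{16}\rangle$ of image exactly $\{\pm 1\}$ cut out by $\mathbb{Q}(\sqrt{-3})$; I would rule this out by inspecting the finitely many candidate $j$-invariants parametrized by $X_0(16)$, or by citing the known classification of quadratic fields over which cyclic $16$-torsion occurs.

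For $n \in \{13, 14, 43, 67\}$ the bound is $[\mathbb{Q}(P_n):\mathbb{Q}] \mid 6$, so $\mathbb{Q}(P_n) \subseteq \mathbb{Q}(\zeta_9)$. The exceptional orders $n = 43, 67$ admit only finitely many $j$-invariants with a rational $n$-isogeny by Theorem \ref{Theorem 2.3} (these are the CM $j$-invariants attached to the Heegner discriminants $-43, -67$), and for each such curve the $n$-torsion field is explicit and plainly not contained in $\mathbb{Q}(\zeta_9)$. The cases $n = 13, 14$ are the main obstacle: Theorem \ref{najmancubic} allows $C_{13}$ and $C_{14}$ over infinitely many cubic fields, so one cannot abstractly rule out $\mathbb{Q}(P_n) = \mathbb{Q}(\zeta_9)^+$ or $\mathbb{Q}(\zeta_9)$. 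For these two cases I would run the same modular-curve argument that appears in the last lemma of the previous section: compute $r(J_1(n)(\mathbb{Q}(\zeta_9))) = 0$ by exploiting the twist decomposition $\mathbb{Q}(\zeta_9) = \mathbb{Q}(\zeta_9)^+(\sqrt{-3})$ to write the rank as $r(J_1(n)(\mathbb{Q}(\zeta_9)^+)) + r(J_1^{-3}(n)(\mathbb{Q}(\zeta_9)^+))$, then apply Katz's result to inject $J_1(n)(\mathbb{Q}(\zeta_9))_{tors}$ into $J_1(n)(\mathbb{F}_{\mathfrak p})$ for suitable primes $\mathfrak p$ of good reduction, and finally check via Mumford representations that every point of $X_1(n)(\mathbb{Q}(\zeta_9))$ is a cusp.

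For $n \in \{19, 37, 163\}$ the degree bound is only the trivial $[\mathbb{Q}(P_n):\mathbb{Q}] \mid 18$, but these orders are again exceptional in Theorem \ref{Theorem 2.3} and admit only finitely many $j$-invariants supporting a rational $n$-isogeny over $\mathbb{Q}$. I would enumerate these curves together with their quadratic twists and, for each, verify that the field generated by a point of order $n$ is not a subfield of $\mathbb{Q}(\zeta_{27})$; for the exceptional primes $37$ and $163$ the relevant torsion fields are ray class fields of imaginary quadratic orders, which makes the non-containment transparent. In short, the bulk of the lemma is bookkeeping around Lemma \ref{nika} combined with the finiteness statements of Theorems \ref{najmanquadratic}, \ref{najmancubic} and \ref{Theorem 2.3}, with the genuine difficulty concentrated in the degree-$6$ cases $n = 13, 14$, where a bespoke modular-curve computation over $\mathbb{Q}(\zeta_9)$ seems unavoidable.
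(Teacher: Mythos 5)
Your overall strategy is the same as the paper's: apply Lemma \ref{nika} with $m=1$ to get $[\mathbb{Q}(P_n):\mathbb{Q}]\mid\gcd(\phi(n),18)$, use the fact that $\mathbb{Q}(\zeta_{27})$ has a unique subfield of each degree $1,2,3,6,9,18$, finish the small-degree cases with Theorems \ref{Theorem 2.1} and \ref{najmanquadratic}, and resort to modular-curve computations over $\mathbb{Q}(\zeta_9)$ (or its real subfield, via the twist decomposition of Lemma \ref{oddtors}) for $n=13,14$. Your bookkeeping of the gcd's is correct, and your treatment of $n\in\{11,15,17,25\}$ and of $n=13,14$ matches the paper in substance; for $n=17$ your route via Lemma \ref{nika} and Theorem \ref{najmanquadratic} is in fact simpler than the paper's appeal to \cite[Theorem 5.8]{growth}.

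There are, however, two concrete problems. First, your finishing move for $C_{16}$ does not work as stated: $X_0(16)$ has genus $0$ with infinitely many rational points (indeed $16$ is in the ``infinitely many'' list of Theorem \ref{Theorem 2.3}), so there is no finite list of candidate $j$-invariants to inspect; and $C_{16}$ \emph{does} occur over infinitely many quadratic fields by Theorem \ref{najmanquadratic}, so there is no classification of quadratic fields excluding it in general. What is actually needed, once you have pinned $\mathbb{Q}(P_{16})$ inside $\mathbb{Q}(\sqrt{-3})$, is the field-specific result that $C_{16}$ does not arise for $E/\mathbb{Q}$ over $\mathbb{Q}(\sqrt{-3})$ (Najman's determination of torsion over the quadratic cyclotomic fields, which is exactly what the paper cites). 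Second, your assertion that for $n=37$ the relevant curves are CM with torsion fields that are ray class fields of imaginary quadratic orders is false: by Theorem \ref{Theorem 2.3} the rational $37$-isogenies occur on non-CM curves. Your fallback of enumerating the finitely many $j$-invariants and checking degrees would still succeed (the possible degrees $[\mathbb{Q}(P_{37}):\mathbb{Q}]$ are $12,36,444,1332$, none dividing $18$), but the stated justification is wrong; the paper instead quotes that $4$ divides $[\mathbb{Q}(P_{37}):\mathbb{Q}]$, which cannot divide $18$. For $n=43,67$ the paper's use of the bound $[\mathbb{Q}(P_n):\mathbb{Q}]\ge (n-1)/2>18$ is cleaner than your proposed case-by-case inspection, and for $n=19$ the paper carries out precisely the division-polynomial check you sketch.
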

\begin{proof}
\fbox{$C_{11},  C_{25}$}: If $n \in \{11,25\}$ is a point of order $n$, then then by Lemma \ref{nika} we have $[\Q(P_{n}) : \Q] | 2$, which is impossible by Theorem \ref{najmanquadratic}. \\
\fbox{$C_{13}$}: Let $P_{13} \in E(\Q(\zeta_{27}))$ be the point of order $13$. By Lemma \ref{nika} we have
$[\Q(P_{13}) : \Q] \mid 6$. Therefore, this torsion subgroup is defined over $\Q(\zeta_{9})$.
\noindent Theorem \ref{najmanquadratic} tells us that this torsion subgroup cannot be defined over quadratic field. Therefore, it is defined over sextic or cubic field. Assume it is defined over sextic field (entire $\Q(\zeta_{9})$).
\noindent Then we can use Lemma \ref{oddtors} to get:
\[
C_{13} \cong E(\Q(\zeta_{9}))_{(2')} \cong E(\Q(\zeta_{9}^{+}))_{(2')} \oplus E^{-3}(\Q(\zeta_{9}^{+}))_{(2')}.
\]
This means that either $E$ or $E^{-3}$ has torsion subgroup $\Z / 13\Z$ defined over $\Q(\zeta_{9})^{+}$.

\noindent Now we will be finished if we prove that torsion subgroup $\Z / 13\Z$ cannot appear over $\Q(\zeta_{9})^{+}$. To do this, we consider $X_1(13)(\Q(\zeta_{9})^{+})$.
\\ As before, we use Magma to determine that $J_1(13)(\Q(\zeta_{9})^{+}) \cong J_1(13)(\Q)$ and that all points on the Jacobian come from cusps, which completes the proof.
\\
\fbox{$C_{14}$}: Let $P_{14} \in E(\mathbb{Q}(\zeta_{27}))$ be a point of order $14$. By Lemma \ref{nika} it follows that $[\mathbb{Q}(2P_{14}):\mathbb{Q}]$ divides $6$, so $\mathbb{Q}(2P_{14})$ is contained in $\mathbb{Q}(\zeta_{9})$. The point $7P_{14}$ of order $2$ satisfies $[\mathbb{Q}(7P_{14}):\mathbb{Q}] \in \{1,2,3\}$, which means that it is also contained in $\mathbb{Q}(\zeta_{9})$. It follows that $P_{14} \in E(\mathbb{Q}(\zeta_{9})).$
Consider the modular curve $X_{1}(14)$. It is an elliptic curve with LMFDB label \href{https://www.lmfdb.org/EllipticCurve/Q/14/a/5}{14.a5}. On the LMFDB page of the mentioned curve we can see that its torsion subgroup does not grow in any number field contained in $\mathbb{Q}(\zeta_{9})$. It remains to show that $r(E(\mathbb{Q}))=r(E(\mathbb{Q}(\zeta_{9}))=0$, which turns out to be true by a computation in Magma \cite{magma}. Therefore $X_{1}(14)(\mathbb{Q})=X_{1}(14)(\mathbb{Q}(\zeta_{9}))$ and there does not exist an elliptic curve over $\mathbb{Q}$ with a point of order $14$ over $\mathbb{Q}(\zeta_{9})$ and consequently over $\mathbb{Q}(\zeta_{27})$.
\\
\fbox{$C_{15}$}: Let $P_{15} \in E(\mathbb{Q}(\zeta_{27}))$ be a point of order $15$. Then $3P_{15}$ is a point of order $5$ and $[\mathbb{Q}(3P_{15}):\mathbb{Q}]$ is a divisor of $[\mathbb{Q}(\zeta_{27}):\mathbb{Q}]=18$. By Table \ref{tableSutherland} we see that $[\mathbb{Q}(3P_{15}):\mathbb{Q}] \in \{1,2\}$. The same way as in the Lemma \ref{lema43}, $C_{2} \oplus C_{12}$ case, we see that the point $5P_{15}$ of order $3$ is also defined over at most quadratic extension contained in $\mathbb{Q}(\zeta_{27})$. Since there is only one quadratic extension contained in $\mathbb{Q}(\zeta_{27})$, namely $\mathbb{Q}(\zeta_{3})$, we have $P_{15} \in E(\mathbb{Q}(\zeta_{3}))$, which contradicts the Theorem \ref{najmanquadratic}.
\\
\fbox{$C_{16}$}: Let $P_{16} \in E(\mathbb{Q}(\zeta_{27}))$ be a point of order $16$. Then the point $8P_{16}$ has order $2$ and we have $[\mathbb{Q}(8P_{16}):\mathbb{Q}] \in \{ 1,2,3\}$. By \cite[Proposition 4.8.]{growth} we have $[\mathbb{Q}(P_{16}):\mathbb{Q}]=2^{a}3^{b}$, where $a \ge 0$ is an integer and $b \in \{0,1\}.$ Since the field $\mathbb{Q}(P_{16})$ is contained in $\mathbb{Q}(\zeta_{27})$, it follows that $2^{a}3^{b}$ divides $[\mathbb{Q}(\zeta_{27}):\mathbb{Q}]=18$. We conclude that $[\mathbb{Q}(P_{16}):\mathbb{Q}] \in \{ 1,2,3,6 \}.$ Assume that $[\mathbb{Q}(8P_{16}):\mathbb{Q}]=3$. This means that $\mathbb{Q}(8P_{16})$ is cyclic, so the entire $2$-torsion is contained in this field and $P_{16}$ is defined over a number field of degree $3$ or $6$, but such a field is contained in $\mathbb{Q}(\zeta_9)$. Therefore we have $C_{2} \oplus C_{16} \subseteq E(\mathbb{Q}(\zeta_9)),$ but this is impossible by \cite[Theorem 1.1]{tomi1}. It remains to consider the case when $[\mathbb{Q}(8P_{16}):\mathbb{Q}] \in \{ 1,2 \}$. It follows that $[\mathbb{Q}(P_{16}):\mathbb{Q}] \in \{ 1,2 \}$ again by \cite[Proposition 4.8.]{growth}. Since $\mathbb{Q}(P_{16})$ is at most quadratic extension of $\mathbb{Q}$ contained in $\mathbb{Q}(\zeta_{27})$, it follows that $\mathbb{Q}(P_{16}) \subseteq \mathbb{Q}(\zeta_3)$. By \cite[Theorem 1]{najman_torshypN} this turns out to be impossible.
\\
\fbox{$C_{17}, C_{37}$}: Assume that $n \in \{ 17, 37 \}$ and that $P_{n} \in E(\mathbb{Q}(\zeta_{27}))$ is a point of order $n$. By \cite[Theorem 5.8]{growth} it follows that $[\mathbb{Q}(P_{n}):\mathbb{Q}]$ is divisible by $4$, but since $\mathbb{Q}(P_{n}) \subseteq \mathbb{Q}(\zeta_{27})$, this is impossible.
\\
\fbox{$C_{19}$}: Let us consider the case when $n=19$. If $P_{19} \in E(\mathbb{Q}(\zeta_{27}))$ is a point of order $19$, then $E$ has a rational $19$-isogeny. By \cite{loz}, we have $j(E)=-2^{15} \cdot 3^{3}$. The $19$th division polynomial $f_{E,19}$ must have a root over $\mathbb{Q}(\zeta_{27})$. Using Magma, we check that this is not the case and therefore we arrive at the contradiction.
\\
\fbox{$C_{43}, C_{67}, C_{163}$}: Assume that $n \in \{43,67,163 \}$ and $P_{n} \in E(\mathbb{Q}(\zeta_{27}))[n]$. By \cite[Theorem 2.1]{loz} it follows that $[\mathbb{Q}(P_{n}):\mathbb{Q}] \ge \frac{n-1}{2} > [\mathbb{Q}(\zeta_{27}):\mathbb{Q}]=18,$ a contradiction.
\end{proof}

\begin{lemma} \label{melisa}
Let $E/\Q$ be an elliptic curve. Then $E(\Q(\zeta_{27}))_{tors}$ is not isomorphic to $C_{18}$ or $C_{2} \oplus C_{18}$.
\end{lemma}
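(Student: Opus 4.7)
The plan is to reduce both assertions to the classification of torsion of rational elliptic curves over $\mathbb{Q}(\zeta_9)$ given by \cite[Theorem 1.1]{tomi1}, which is the same mechanism already used in the preceding lemma to rule out $C_2\oplus C_{16}$. The key arithmetic observation is that $\Gal(\mathbb{Q}(\zeta_{27})/\mathbb{Q})$ is cyclic of order $18$, so the unique subfield of $\mathbb{Q}(\zeta_{27})$ of each degree $1,2,3,6,9,18$ exists, and the subfields of degree dividing $6$ form a chain contained in $\mathbb{Q}(\zeta_9)$.

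For $C_{18}$, I would let $P_{18}$ be a generator of $E(\mathbb{Q}(\zeta_{27}))_{tors}$. Since $E(\mathbb{Q}(\zeta_{27}))[18]\cong C_{18}$, Lemma \ref{nika} with $m=1$ and $n=18$ yields
\[
[\mathbb{Q}(P_{18}):\mathbb{Q}]\mid \gcd(\phi(18),18)=6,
\]
so by the subfield observation $P_{18}\in E(\mathbb{Q}(\zeta_9))$, i.e.\ $C_{18}\subseteq E(\mathbb{Q}(\zeta_9))_{tors}$. This contradicts \cite[Theorem 1.1]{tomi1}.

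For $C_2\oplus C_{18}$, let $P$ be a point of order $18$. Now $E(\mathbb{Q}(\zeta_{27}))[18]\cong C_2\oplus C_{18}$, so Lemma \ref{nika} with $m=2$, $n=9$ gives $[\mathbb{Q}(2P):\mathbb{Q}]\mid \gcd(\phi(9),18)=6$, hence the order $9$ point $2P$ lies in $E(\mathbb{Q}(\zeta_9))$. Separately, $\Gal(\mathbb{Q}(E[2])/\mathbb{Q})$ embeds into $\GL_2(\mathbb{F}_2)\cong S_3$, so $[\mathbb{Q}(E[2]):\mathbb{Q}]\in\{1,2,3,6\}$, and the full $2$-torsion is likewise defined over $\mathbb{Q}(\zeta_9)$. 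Since $\gcd(2,9)=1$, a Bezout combination (e.g.\ $1=5\cdot 2-1\cdot 9$) recovers $P=5(2P)-9P\in E(\mathbb{Q}(\zeta_9))$, giving $C_2\oplus C_{18}\subseteq E(\mathbb{Q}(\zeta_9))_{tors}$, again contradicting \cite[Theorem 1.1]{tomi1}.

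The only step that requires anything beyond the tools assembled above is the appeal to \cite[Theorem 1.1]{tomi1}, so the real work has been done elsewhere; if that reference turned out not to exclude $C_{18}$ and $C_2\oplus C_{18}$ over $\mathbb{Q}(\zeta_9)$ directly, the fallback would be a Jacobian computation on $X_1(18)$ (which has genus $2$) over $\mathbb{Q}(\zeta_9)$, in the same spirit as the $X_1(16)$ analysis carried out in the previous section. I would expect that to be the only potential obstacle.
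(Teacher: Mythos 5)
Your reduction to $\mathbb{Q}(\zeta_9)$ is correct and coincides with the first half of the paper's argument: cyclicity of $\Gal(\mathbb{Q}(\zeta_{27})/\mathbb{Q})$ plus Lemma \ref{nika} (and the bound $[\mathbb{Q}(E[2]):\mathbb{Q}]\in\{1,2,3,6\}$) does force $C_{18}$, resp.\ $C_2\oplus C_{18}$, to be defined over $\mathbb{Q}(\zeta_9)$. The gap is in the final step. A classification of torsion of rational elliptic curves over sextic fields of the kind provided by \cite[Theorem 1.1]{tomi1} cannot exclude $C_{18}$ or $C_2\oplus C_{18}$: both groups genuinely occur over the cyclic sextic field $\mathbb{Q}(\zeta_7)$ (the curves \texttt{14a4} and \texttt{14a5} in Lemma \ref{petsedam} of this very paper), and $C_{18}$ already occurs over cubic fields by Theorem \ref{najmancubic}. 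So these groups necessarily appear in any such list, and the citation you lean on is used in the paper only for groups like $C_2\oplus C_{16}$ and $C_{27}$ that do \emph{not} occur over any sextic field. What is needed here is an argument specific to the field $\mathbb{Q}(\zeta_9)$, not to its degree.

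Your fallback --- computing $X_1(18)(\mathbb{Q}(\zeta_9))$ directly --- is the right idea but is harder than necessary (a genus $2$ Jacobian over a sextic field). The paper instead descends further: by Lemma \ref{oddtors}, $C_9\cong E(\mathbb{Q}(\zeta_9))_{(2')}\cong E(\mathbb{Q}(\zeta_9)^+)_{(2')}\oplus E^{-3}(\mathbb{Q}(\zeta_9)^+)_{(2')}$, so one of $E$, $E^{-3}$ has a point of order $9$ over the cubic field $\mathbb{Q}(\zeta_9)^+$; a short case analysis on $[\mathbb{Q}(P_2):\mathbb{Q}]$ shows both twists have a point of order $2$ there, hence one of them has a point of order $18$ over $\mathbb{Q}(\zeta_9)^+$. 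This is then ruled out by the known fact that $X_1(18)(\mathbb{Q}(\zeta_9)^+)$ consists only of cusps (\cite[Lemma 3.4.7]{krijanphd}); note that this last fact is again specific to $\mathbb{Q}(\zeta_9)^+$, since $C_{18}$ does occur over some cubic fields. You should supply this twist-descent step (or an equivalent field-specific computation) for the proof to close.
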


\begin{proof}
If $E(\Q(\zeta_{27}))_{tors} \cong C_{18}$, then Lemma \ref{nika} directly gives us that this torsion subgroup is defined over a number field of degree $6$ which can only be $\Q(\zeta_{9})$.
\\
\noindent If $E(\Q(\zeta_{27}))_{tors} \cong C_{2} \oplus C_{18}$, then Lemma \ref{nika} gives us that if $P_{18} \in E(\Q(\zeta_{27}))_{tors}$ is a point of order $18$, then $2P_{18}$ is defined over $\Q(\zeta_{9})$. We know that $[\Q(E[2]) : \Q] \in \{1,2,3,6\}$, but all unique subextensions of $\Q(\zeta_{27})$ of those degrees are contained in $\Q(\zeta_{9})$, so again our torsion subgroup is defined over $\Q(\zeta_{9})$.
\noindent Now from Lemma \ref{oddtors} we get that
$$C_{9} \cong E(\Q(\zeta_{9}))_{(2')} \cong E(\Q(\zeta_{9}^{+}))_{(2')} \oplus E^{-3}(\Q(\zeta_{9}^{+}))_{(2')}.$$
Therefore, one of $E(\Q(\zeta_{9}^{+}))$ and $E^{-3}(\Q(\zeta_{9}^{+}))$ has a point of order $9$.
Let $P_2 \in E(\Q(\zeta_{9}))_{tors}$ be a point of order $2$. If $[\Q(P_2) : \Q] \in \{1,3\}$, then $P_2$ is on both $E(\Q(\zeta_{9}^{+}))$ and $E^{-3}(\Q(\zeta_{9}^{+}))$. If $[\Q(P_2) : \Q] = 2$, then there is another point $Q_2$ of order $2$ on $E$ defined over $\Q$. In any case, both $E(\Q(\zeta_{9}^{+}))$ and $E^{-3}(\Q(\zeta_{9}^{+}))$ have a point of order $2$. Finally, one of them has a point of order $18$. However, it was proved in \cite[Lemma 3.4.7]{krijanphd} that all the points on $X_{1}(18)(\mathbb{Q}(\zeta_{9})^{+})$ are cusps, which completes the proof.
\end{proof}

\begin{lemma} \label{lema43}
Let $E/\Q$ be an elliptic curve. Assume that $C_{2} \oplus C_{2n} \cong E(\Q(\zeta_{27}))_{tors}$. Then $n \in \{ 1,2,3,4 \}$. Additionally, $C_6 \oplus C_6 \not\subseteq E(\Q(\zeta_{27}))$.
\end{lemma}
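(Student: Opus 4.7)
The plan is first to reduce, by Theorem \ref{eva}, to ruling out $n\in\{5,6,7,8,9\}$ for $C_{2}\oplus C_{2n}$, together with the group $C_{6}\oplus C_{6}$. For $n=7,8,9$ the ambient group contains $C_{14}$, $C_{16}$ or $C_{18}$ respectively, all of which have already been excluded in the preceding cyclic-case lemma and in Lemma \ref{melisa}; so only $C_{2}\oplus C_{10}$, $C_{2}\oplus C_{12}$ and $C_{6}\oplus C_{6}$ require genuine work.

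The uniform strategy for these three is to combine Lemma \ref{nika} with the well-known restriction $[\mathbb{Q}(E[2]):\mathbb{Q}]\in\{1,2,3,6\}$ and with the fact that $\mathbb{Q}(\zeta_{27})/\mathbb{Q}$ is cyclic of order $18$, so that every Galois subfield whose degree divides $6$ is contained in $\mathbb{Q}(\zeta_{9})$. Pushing each candidate torsion group down into $E(\mathbb{Q}(\zeta_{9}))$ in this way, one invokes the classification of $E(\mathbb{Q}(\zeta_{9}))_{tors}$ for rational elliptic curves from \cite[Theorem 1.1]{tomi1} to derive a contradiction.

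Concretely, for $C_{2}\oplus C_{10}$ with a point $P_{10}$ of order $10$, Lemma \ref{nika} (with $m=2$, $n=5$) gives $[\mathbb{Q}(2P_{10}):\mathbb{Q}]\mid\gcd(\phi(5),18)=2$, so $2P_{10}\in E(\mathbb{Q}(\zeta_{3}))$; since $5P_{10}\in E[2]\subseteq E(\mathbb{Q}(\zeta_{9}))$ and $\gcd(2,5)=1$, we conclude $P_{10}\in E(\mathbb{Q}(\zeta_{9}))$ and hence $C_{2}\oplus C_{10}\subseteq E(\mathbb{Q}(\zeta_{9}))$. For $C_{2}\oplus C_{12}$ with a point $P_{12}$ of order $12$, the same lemma forces both $4P_{12}$ (order $3$) and $3P_{12}$ (order $4$) into $E(\mathbb{Q}(\zeta_{3}))$, so $P_{12}\in E(\mathbb{Q}(\zeta_{3}))$ and again $C_{2}\oplus C_{12}\subseteq E(\mathbb{Q}(\zeta_{9}))$.

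For $C_{6}\oplus C_{6}$ the argument is done at the level of the field $\mathbb{Q}(E[3])$: it is a Galois subfield of the cyclic extension $\mathbb{Q}(\zeta_{27})/\mathbb{Q}$, hence cyclic, and contains $\mathbb{Q}(\zeta_{3})$ by the Weil pairing. Since its Galois group also embeds in $\mathrm{GL}_{2}(\mathbb{F}_{3})$ (order $48$), its degree over $\mathbb{Q}$ is an even divisor of $\gcd(18,48)=6$, hence $2$ or $6$; in either case $E[3]\subseteq E(\mathbb{Q}(\zeta_{9}))$, and combined with $E[2]\subseteq E(\mathbb{Q}(\zeta_{9}))$ this gives $C_{6}\oplus C_{6}\subseteq E(\mathbb{Q}(\zeta_{9}))$, contradicting \cite[Theorem 1.1]{tomi1}. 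The only place where this plan could fail is if any of the three groups turned out not to be ruled out by the cited classification; in that event the appeal to \cite{tomi1} would be replaced by a direct modular-curve computation on $X_{1}(2,10)$, $X_{1}(2,12)$ or $X(6)$ base-changed to $\mathbb{Q}(\zeta_{9})$, entirely in the style of the modular-curve arguments used in the earlier lemmas.
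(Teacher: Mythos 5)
Your reduction to the three groups $C_{2}\oplus C_{10}$, $C_{2}\oplus C_{12}$ and $C_{6}\oplus C_{6}$, and the idea of pushing each of them down into $E(\Q(\zeta_{9}))$, is broadly sound, but there is one concrete slip along the way: Lemma \ref{nika} applied to $C_{2}\oplus C_{12}$ (with $m=2$, $n=6$) controls the point $2P_{12}$ of order $6$, not the point $3P_{12}$ of order $4$. The order-$4$ point is \emph{not} forced into $\Q(\zeta_{3})$; it can a priori have degree $3$ or $6$ over $\Q$, which is exactly why the paper must invoke \cite[Proposition 4.8]{growth} and a Magma computation in the subcase $G_{E}(2)=2Cn$. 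Your containment $C_{2}\oplus C_{12}\subseteq E(\Q(\zeta_{9}))$ can be salvaged by a degree argument on $\Q(3P_{12})$, but not by the reasoning you give.

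The more serious gap is the final step. You delegate the exclusion of all three groups over $\Q(\zeta_{9})$ to \cite[Theorem 1.1]{tomi1}, but that reference cannot carry this weight: $C_{2}\oplus C_{10}$ and $C_{2}\oplus C_{12}$ already belong to $\Phi_{\mathbb{Q}}(2)$ by Theorem \ref{najmanquadratic}, hence they occur over many sextic fields, so any classification general enough to apply both to $\Q(\zeta_{9})$ and to $\Q(\zeta_{7})$ (as \cite[Theorem 1.1]{tomi1} is used elsewhere in the paper) necessarily \emph{contains} these groups and cannot rule them out; what is needed is information specific to $\Q(\zeta_{9})$, in effect to its quadratic subfield $\Q(\sqrt{-3})$ and its cubic subfield. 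Your fallback clause (``replace the appeal by a modular-curve computation'') concedes the point but does not perform the work, and this is precisely where the paper's proof has its real content: for $C_{2}\oplus C_{10}$ it splits on $G_{E}(2)$, using the classification over $\Q(\sqrt{-3})$ from \cite{najman_torshypN} when $G_{E}(2)\subseteq 2B$ and Zywina's $j$-invariant parametrizations plus a rational-point computation when $G_{E}(2)=2Cn$; for $C_{2}\oplus C_{12}$ it argues analogously; and for $C_{6}\oplus C_{6}$ it identifies the unique admissible cyclic image $G_{E}(6)$ and shows the two resulting $j$-invariant conditions are incompatible. As written, your proposal replaces none of this, so the proof is incomplete.
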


\begin{proof}
From Theorem \ref{eva} it follows that $n \le 9$. We have shown that $E(\Q(\zeta_{27}))$ cannot contain a point of order $18$ in Lemma \ref{melisa}.

\fbox{$C_{2} \oplus C_{10}$}: Let $P_{5} \in E(\Q(\zeta_{27}))$ be the point of order $5$. It follows that $E$ has a rational $5$-isogeny and $[\Q(P_5):\Q] \in \{ 1, 2\}$. If $G_{E}(2) \subseteq 2B$, then by Table \ref{tableSutherland} we see that $[\Q(E[2]):\Q] \in \{ 1, 2\}$. Thus we have found two at most quadratic fields contained in $\Q(\zeta_{27})$. Since $\Q(\zeta_{27})$ has an unique quadratic subextension $F=\Q(\sqrt{-3})$, it follows that $F=\Q(E[2])=\Q(P_5)$. We conclude that $C_{2} \oplus C_{10} \subseteq E(F)_{tors}$, which is impossible by \cite[Theorem 1]{najman_torshypN}. 
    \\
    Assume that $G_{E}(2)=2Cn$. By Theorem \cite[Theorem 1.1]{zywina} it follows that $j(E)=t^2+1728$, for some $t \in \Q$. Since $E$ has a rational $5$-isogeny, by \cite[Theorem 1.3]{zywina} we have $j(E)=\frac{25(s^2+10s+5)^3}{s^5}$, for some $s \in \Q \setminus \{0\}$. It remains to find rational points on the induced curve. By \cite[Page 61]{tomi3} we see that such rational points do not exist.
    \par
\fbox{$C_{2} \oplus C_{12}$}: Let $P_{3} \in E(\Q(\zeta_{27}))$ be the point of order $3$. The extension $\Q(P_3)$ is cyclic over $\Q$ since it is a subfield of $\Q(\zeta_{27})$. By Table \ref{tableSutherland} we see that $G_{E}(3)$ must be contained in the Borel subgroup of $\GL_{2}(\mathbb{Z}/3\mathbb{Z})$. Applying Theorem \cite[Theorem 9.3]{loz} it follows that $[\Q(P_3):\Q] \in \{ 1,2 \}$, so we have $\Q(P_3) \subseteq \Q(\zeta_3)$. Assume that $G_{E}(2) \subseteq 2B$. By \cite[Proposition 4.6]{growth} it follows that $C_{2} \oplus C_{4} \subseteq E(\Q(\zeta_{3}))$. We conclude that $C_{2} \oplus C_{12} \subseteq E(\Q(\zeta_3))$, which is impossible by \cite[Theorem 1]{najman_torshypN}.
    \\Consider the case when $G_{E}(2)=2Cn$. By \cite[Proposition 4.8]{growth} it follows that the point $P_{4}$ of order $4$ is defined over cubic or sextic subfield contained in $\mathbb{Q}(\zeta_{27})$. A computation in Magma \cite{magma} shows that if a point of order $4$ is defined over cubic or sextic number field, then $G_{E}(2)=\GL_{2}(\mathbb{Z}/2\mathbb{Z})$, a contradiction.
\par
\fbox{$C_{2} \oplus C_{16}$}: By \cite[Corollary 3.5]{pprimarnatorzija} it follows that this is impossible.,
\par
\fbox{$C_{6} \oplus C_{6}$}: Since $\Q(E[6])$ is contained in $\Q(\zeta_{27})$, we have that $|G_{E}(6)|$ divides $[\Q(\zeta_{27}):\Q]=18$. Additionally, the group $G_{E}(6)$ is cyclic. If $|G_{E}(6)| < 6$, then it follows that $E$ obtains a full $6$-torsion over a number field of degree $1,2$ or $3$, but this is impossible by Theorem \ref{najmanquadratic} and Theorem  \ref{najmancubic}. Assume that $|G_{E}(6)| \in \{ 6, 9 \}$. A search in Magma \cite{magma} shows that there exists only one cyclic group $G$ with such property, namely: 
    \[G:= \Big\langle \begin{bmatrix}
    2 & 3 \\
    3 & 1
    \end{bmatrix} \Big\rangle. \] Reducing $G$ modulo $2$ and modulo $3$ we see that $G_{E}(2)=2Cn$ and $G_{E}(3)=3Cs.1.1$. By \cite[Theorem 1.1]{zywina} we have that $j(E)=t^2+1728$, for some $t \in \Q$. Similary from Theorem \cite[Theorem 1.2]{zywina} we have that $j(E)=f(s)^3$, for some rational function $f(s)$ and $s \in \Q$. A computation in Magma \cite{magma} shows that the only affine point on the elliptic curve 
    \[t^2+1728=x^3 \]
    is $(t,x)=(0,12)$. A direct computation shows that the equation $12=f(s)$ does not have a rational solution in $\Q$. Therefore $E$ cannot have $C_6 \oplus C_6$ torsion over $\Q(\zeta_{27})$.

\end{proof}

\section{Torsion growth over \texorpdfstring{$\mathbb{Q}(\zeta_{5}),\mathbb{Q}(\zeta_{7}) \; \text{and} \; \mathbb{Q}(\zeta_{11})$}{Qp}}

We note that if $p$ is a prime number and $E/\mathbb{Q}$ an elliptic curve such that $E(\Q(\zeta_p))_{tors}$ contains a subgroup isomorphic to $C_{n} \oplus C_{mn}$, then by the properties of Weil pairing we have $\Q(\zeta_n) \subseteq \Q(\zeta_p)$ which forces $n \le 2$.

\begin{lemma}\label{petsedam}
Let $E/\mathbb{Q}$ be an elliptic curve and let $p \in \{ 5,7,11\}$ be a prime number. Apart from the groups in Mazur's theorem, the group $E(\mathbb{Q}(\zeta_p))_{tors}$ can only be isomorphic to one of the following groups:
\begin{itemize}
    \item If $p=5$, \[C_{5} \oplus C_{5} \; (\href{https://www.lmfdb.org/EllipticCurve/Q/550k2}{550k2}), \quad C_{15} \; (\href{https://www.lmfdb.org/EllipticCurve/Q/50/a/2}{50a2}) \quad \text{and} \quad C_{16} \; (\href{https://www.lmfdb.org/EllipticCurve/Q/15/a/7}{15a7}).\]
    
    \item $If p=7$, \[C_{13} \; \href{https://www.lmfdb.org/EllipticCurve/Q/147/c/2}{(147c2)}, \quad C_{14} \; \href{https://www.lmfdb.org/EllipticCurve/Q/49a1}{(49a1)},\]
\[
C_{18} \; \href{https://www.lmfdb.org/EllipticCurve/Q/14a4}{(14a4)}, \quad C_{2} \oplus C_{14} \; \href{https://www.lmfdb.org/EllipticCurve/Q/49a4}{(49a4)}, \quad C_{2} \oplus C_{18} \; \href{https://www.lmfdb.org/EllipticCurve/Q/14a5}{(14a5)}.\]

    \item If $p=11$, \[C_{11} \; \href{https://www.lmfdb.org/EllipticCurve/Q/121b2}{(121b2)}, \quad C_{25} \; \href{https://www.lmfdb.org/EllipticCurve/Q/11a3}{(11a3)}, \quad C_{2} \oplus C_{10} \; \href{https://www.lmfdb.org/EllipticCurve/Q/10230bg2}{(10230bg2)}.\]
\end{itemize}
\end{lemma}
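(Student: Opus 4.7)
The plan is to proceed case-by-case in exactly the style of the preceding lemmas for $\mathbb{Q}(\zeta_{16})$ and $\mathbb{Q}(\zeta_{27})$, starting from Theorem \ref{eva} to bound the search. The Weil pairing restricts $n$ in $C_n \oplus C_{mn} \subseteq E(\mathbb{Q}(\zeta_p))_{tors}$ to $n \in \{1,2\}$ for $p \in \{7,11\}$ and additionally allows $n = 5$ for $p = 5$ (which will actually occur). Combined with Chou's theorem, this produces a finite list of candidate groups to sieve, and what has to be done is to eliminate every group on that list not appearing in the statement, and then exhibit a realization for every group which is claimed to appear.

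For the first, elimination pass I would apply Lemma \ref{nika} systematically: if $P$ has prime order $\ell$ and $\gcd(\phi(\ell), p-1)$ is strictly smaller than $p-1$, then $\mathbb{Q}(P)$ is forced into a proper subfield of $\mathbb{Q}(\zeta_p)$, and the classifications of $\Phi_{\mathbb{Q}}(d)$ for $d = 1, 2, 3, 4, 5$ (Mazur, Najman in Theorems \ref{najmanquadratic} and \ref{najmancubic}, Chou, Gonz\'alez--Jim\'enez), together with $\Phi_{\mathbb{Q}}(p) = \Phi(1)$ for primes $p \ge 7$, rule the candidate out. This disposes of most cyclic candidates of order up to $25$ together with most of the $C_2 \oplus C_{2m}$ family. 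For the large prime-order candidates $C_{37}, C_{43}, C_{67}, C_{163}$ (and often $C_{19}$) I would copy the $C_{43}, C_{67}, C_{163}$ argument of Lemma \ref{lema43}: either the bound $[\mathbb{Q}(P_n):\mathbb{Q}] \ge (n-1)/2$ of \cite[Theorem 2.1]{loz} already exceeds $p-1$, or the finite explicit list of $j$-invariants admitting a rational $n$-isogeny lets one test the $n$-th division polynomial over $\mathbb{Q}(\zeta_p)$ directly in Magma.

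A handful of resistant candidates will survive (for example groups of order $13, 14, 15, 16, 18$ over $\mathbb{Q}(\zeta_7)$ when they are not already on the final list, and the $2$-primary candidates over $\mathbb{Q}(\zeta_{11})$). For these I would deploy the modular-curve machinery already used in the $\mathbb{Q}(\zeta_{16})$ and $\mathbb{Q}(\zeta_{27})$ sections: establish $r(J_1(n)(\mathbb{Q}(\zeta_p))) = 0$, shortening the computation via the twist decomposition $r(J_1(n)(L(\sqrt d))) = r(J_1(n)(L)) + r(J_1^{d}(n)(L))$ along the tower of subfields; inject $J_1(n)(\mathbb{Q}(\zeta_p))_{tors}$ into $J_1(n)(\mathbb{F}_\mathfrak{p})$ for suitable primes $\mathfrak{p}$ via Katz \cite{katz}; intersect several such reductions; and identify the non-cuspidal points from the Mumford representation. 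For candidates of odd order I would first descend to $\mathbb{Q}(\zeta_p)^+$ via Lemma \ref{oddtors} to shrink the ambient field. Existence of the groups listed in the statement is then checked by direct Magma computation on the indicated LMFDB curves.

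The principal obstacle is the modular-curve step over $\mathbb{Q}(\zeta_{11})$. This field has degree $10$ with only a quadratic, quintic, and full subfield, so Lemma \ref{oddtors} affords a single useful descent and the twist-decomposition tower is short. For candidates such as $C_{25}$ and $C_2 \oplus C_{10}$ the rank and torsion of $J_1(25)$ and $J_1(2,10)$ must then essentially be controlled over a degree-$5$ or degree-$10$ field, and the reduction-modulo-$\mathfrak{p}$ step requires rational primes whose residue degree in $\mathbb{Q}(\zeta_{11})$ is small and whose reductions have Jacobian orders with small gcd, in the style of the $C_{13}$ computation in the $\mathbb{Q}(\zeta_{16})$ section. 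I expect this to be the most delicate and computationally expensive part of the proof.
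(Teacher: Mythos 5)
Your overall toolkit is the right one --- Chou's theorem plus the Weil pairing to bound the candidates, Lemma \ref{nika} together with the known sets $\Phi_{\mathbb{Q}}(d)$ to push points into small subfields, and modular-curve/Jacobian computations for the stragglers --- but the paper's route is considerably lighter than what you propose, because it outsources almost the entire sieve to prior classifications. For $p=5$ it simply invokes \cite[Theorem 5]{najman_bruin} (torsion over $\mathbb{Q}(\zeta_5)$ is already essentially classified there), leaving only $C_{17}$ to kill via \cite[Theorem 5.8]{growth}; for $p=11$ it cites \cite[Lemma 6.0.8]{tomi3} to reduce to just $C_{15}$, $C_{16}$ and $C_{2}\oplus C_{12}$; and for $C_{27}$ over $\mathbb{Q}(\zeta_7)$ it cites \cite[Theorem 1.1]{tomi1} rather than touching the genus-$13$ curve $X_1(27)$, which your ``generic fallback to $J_1(n)$'' would otherwise have to confront. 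Your from-scratch sieve is in principle workable for $p=5,7$, but redoing it without these references is substantially more labor and at least the $C_{27}$ case would be genuinely hard by your stated methods.

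The concrete error is your diagnosis of the ``principal obstacle.'' You propose to control $J_1(25)$ and $J_1(2,10)$ over degree-$5$ or degree-$10$ subfields of $\mathbb{Q}(\zeta_{11})$, but $C_{25}$ and $C_{2}\oplus C_{10}$ are groups that \emph{do} occur for $p=11$ (witnessed by 11a3 and 10230bg2); they only need to be exhibited, not eliminated, so no such Jacobian computation exists in the proof --- and an attempt to show these modular curves have only cusps over $\mathbb{Q}(\zeta_{11})$ would of course fail. Every group that genuinely must be eliminated for $p=11$ descends, via Lemma \ref{nika} and \cite[Proposition 4.8]{growth}, to the unique quadratic subfield $\mathbb{Q}(\sqrt{-11})$ (the relevant point-field degrees divide $\gcd(\cdot,10)$ and land in $\{1,2\}$), so the only computations needed are $X_1(16)$ and $X_1(2,12)$ over a quadratic field. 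The expensive degree-$10$ computation you flag as the crux of the argument is not needed anywhere.
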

\begin{proof}
Assume that $p=5$. By \cite[Theorem 5]{najman_bruin} we conclude that the only possibilities are the ones listed in this Lemma and $C_{17}$. It is easy to rule out $C_{17}$ by using \cite[Theorem 5.8]{growth}.
\par 
Consider the case when $p=11$. If $E/\mathbb{Q}$ is an elliptic curve and if $C_{n} \oplus C_{mn} \in E(\mathbb{Q}(\zeta_{11}))$, then by the properties of the Weil pairing we have $\mathbb{Q}(\zeta_{n}) \subseteq \mathbb{Q}(\zeta_{11})$, which forces $n \in \{1,2, 11\}$. Applying the Theorem \ref{eva} we eliminate the possibility $n=11$. By \cite[Lemma 6.0.8]{tomi3}, it remains to show that the groups $C_{15}, C_{16}$ and $C_{2}\oplus C_{12}$ do not occur. We note that the set $\Phi_{\mathbb{Q}}(2)$ is described by Theorem \ref{najmanquadratic} and the description of  the set $\Phi_{\mathbb{Q}}(5)$ can be found in \cite[Theorem 1]{38}.
\begin{itemize}
    \item $C_{15}$: Assume that $P_{15} \in E(\mathbb{Q}(\zeta_{11}))$ is a point of order $15$. Obviously we have \[ E(\mathbb{Q}(\zeta_{11}))[15] \cong C_{15}.\] By Lemma \ref{nika} it follows that $[\mathbb{Q}(P_{15}):\mathbb{Q}] \in \{1,2\}$, but this contradicts the Theorem \ref{najmanquadratic}. 
    \item $C_{16}$: If $P_{16} \in E(\mathbb{Q}(\zeta_{11}))$ is a point of order $16$, then $8P_{16}$ has order $2$ and is defined over at most quadratic extension contained in $\mathbb{Q}(\zeta_{11})$, which is $\mathbb{Q}(\sqrt{-11})$. By \cite[Proposition 4.8]{growth} it follows that $[\mathbb{Q}(P_{16}):\mathbb{Q}] \in \{1,2\}.$ A computation in Magma \cite{magma} shows that $X_{1}(16)(\mathbb{Q}(\sqrt{-11})$ contains only cusps.
    \item $C_{2} \oplus C_{12}$: As in the previous case, we show that $C_{2} \oplus C_{12} \subseteq E(\mathbb{Q}(\sqrt{-11}))$. The modular curve $X_{1}(2,12)(\mathbb{Q}(\sqrt{-11}))$ has rank $0$ and same torsion as over $\mathbb{Q}$, which means that there does not exist an elliptic curve with $C_{2} \oplus C_{12}$ torsion over $\mathbb{Q}(\sqrt{-11})$.
\end{itemize}

\par It remains to consider $p=7$.
Assume that $n \in \{11,15,16,17,21,25,19,37,43,67,163,27 \}$ and that $E(\Q(\zeta_7)) \cong C_{n}$.
\begin{itemize}
    \item $n \in \{11, 15, 17, 25 \}$: Lemma \ref{nika} gives us that if $P_{n}$ is a point of order $n$, we have $[\Q(P_{n}) : \Q] \mid 2$. Now Theorem \ref{najmanquadratic} gives us the contradiction.
    \item $n \in \{19, 37, 43, 67, 163 \}$:
From \cite[Theorem 5.8]{growth}, we get that the point of order $n$ cannot be defined over the field $\Q(\zeta_{7})$ (a degree $6$ extension).
    \item $n=27$:
This follows from \cite[Theorem 1.1]{tomi1}.
    \item $n=16$:
Lemma \ref{nika} gives us that if $P_{16} \in E(\Q(\zeta_7))$ is a point of order $16$, we have $[\Q(P_{16}) : \Q] \mid 2$. That means that $P_{16} \in E(\Q(\sqrt{-7}))$. We can use the similar methods as before in Magma to consider $X_1(16)(\Q(\sqrt{-7}))$ and prove that $E$ cannot have a point of order $16$ defined over $\Q(\sqrt{-7})$.
    \item $n=21$:
For $C_{21}$, we conclude from \cite[Lemma 2.7]{chou2} that $E$ has a $\Q$-rational $21$-isogeny. There are $4$ elliptic curves (up to $\overline{\Q}$-isomorphism) with a rational $21$-isogeny (see \cite[p.78-80]{rj21}).
Therefore, we can use the division polynomial method since the elliptic curves with the same $j$-invariant have identical division polynomials, up to scalar. We will consider the seventh division polynomials. We can use Magma \cite{magma} to factor those polynomials in the field $\Q(\zeta_7)$ and see that they have no zeroes there. Hence, this case is impossible.
    
\end{itemize}
It remains to eliminate only three non-cyclic groups.
For $C_{2} \oplus C_{16}$, we can use Lemma \ref{nika} to show that if $P_{16} \in E(\Q(\zeta_7))$ is of order $16$, then $[\Q(2P_{16}) : \Q] \mid 2$. By \cite[Proposition 4.6]{growth}, we can conclude that $[\Q(P_{16}) : \Q] \in \{1, 2\}$. Hence, we have a point of order $16$ defined over $\Q(\sqrt{-7})$. However, we already proved that this can't happen when we considered $C_{16}$.
For $C_{2} \oplus C_{10}$ and $C_{2} \oplus C_{12}$, we consider modular curves $X_1(2, 10)(\Q(\zeta_{7}))$ and $X_1(2, 12)(\Q(\zeta_{7}))$ and use Magma to show that they don't have non-cuspidal points, which completes the proof.
\end{proof}

\begin{remark}
Ideally, one would like to give a useful description of possible isomorphism classes of $E/\mathbb{Q}(\zeta_p)$, where $E/\mathbb{Q}$ is an elliptic curve and $p$ is a prime number. One can start with the following question that seems to be out of reach for the authors at the time of writing this paper.
\\
Let $n \in \{ 13, 16, 18 ,25 \}$. Under what conditions on the prime number $p$ does there exist an elliptic curve $E/\mathbb{Q}$ with a point $P_n \in E(\mathbb{Q}(\zeta_p)) $ of order $n$?
\end{remark}

\newpage

\subsection{Appendix: Images of Mod \texorpdfstring{$p$}{Appendix} Galois representations associated to elliptic curves over \texorpdfstring{$\mathbb{Q}$}{Appendix}}

For each possible known subgroup $G_E(p) \subsetneq \GL_2(\mathbb{F}_p)$ where $E/\mathbb{Q}$ is a non-CM elliptic curve and $p$ is a prime, Tables \ref{tableSutherland} and  \ref{tableSutherland2} list in the first and second column the corresponding labels in Sutherland and Zywina notations, and the following data:
\begin{itemize}
\item $d_v=[G_{E}(p):G_{E}(p)_v]=|G_{E}(p).v|$ for $v\in\mathbb{F}_p^2$, $v\ne (0,0)$; equivalently, the degrees of the extensions $\mathbb{Q}(P)$ over $\mathbb{Q}$ for points $P\in E(\overline{\mathbb{Q}})$ of order $p$.
\item$d=|G_{E}(p)|$; equivalently, the degree $\mathbb{Q}(E[p])$ over $\mathbb{Q}$.
\end{itemize}

Note that Tables \ref{tableSutherland} and  \ref{tableSutherland2} are partially extracted from Table 3 of \cite{sutherlandgalrep}. The difference is that \cite[Table 3]{sutherlandgalrep} only lists the minimum of $d_v$, which is denoted by $d_1$ therein.

\begin{table}[h!] \begin{footnotesize} \begin{center}
\begin{tabular}{cc}
\begin{tabular}[t]{cllcc}
& Sutherland  &  Zywina  & $d_v$ & $d$ \\
\toprule
&\texttt{2Cs} & $G_1$   & 1 & 1\\
&\texttt{2B} &$G_2$  & 1\,,\,2 & 2\\
&\texttt{2Cn} & $G_3$  & 3  & 3\\
\midrule
&\texttt{3Cs.1.1} &$H_{1,1}$ & 1\,,\,2 & 2\\
&\texttt{3Cs} & $G_1$  & 2\,,\,4  & 4 \\
 &\texttt{3B.1.1} & $H_{3,1}$ & 1\,,\,6  &6 \\
&\texttt{3B.1.2} &  $H_{3,2}$ & 2\,,\,3  & 6\\
&\texttt{3Ns} &$G_2$  & 4  & 8\\
&\texttt{3B} & $G_3$  & 2\,,\,6  & 12\\
&\texttt{3Nn} & $G_4$  & 8  & 16\\
\midrule
&\texttt{5Cs.1.1} & $H_{1,1}$  & 1\,,\,4  & 4\\
&\texttt{5Cs.1.3} & $H_{1,2}$ & 2\,,\,4  & 4
\end{tabular} & \begin{tabular}[t]{cllcc}
& Sutherland  &  Zywina & $d_v$ & $d$ \\
\toprule
&\texttt{5Cs.4.1} & $G_1$  & 2\,,\,4\,,\,8  & 8\\
&\texttt{5Ns.2.1} & $G_3$  & 8\,,\,16  & 16\\
&\texttt{5Cs} &  $G_2$  & 4\,,\,4  & 16\\
&\texttt{5B.1.1} & $H_{6,1}$  & 1\,,\,20  & 20\\
&\texttt{5B.1.2} & $H_{5,1}$  & 4\,,\,5  & 20\\
&\texttt{5B.1.4} & $H_{6,2}$  & 2\,,\,20  & 20\\
&\texttt{5B.1.3} & $H_{5,2}$  & 4\,,\,10 & 20\\
&\texttt{5Ns} & $G_{4}$  & 8\,,\,16  & 32\\
&\texttt{5B.4.1} & $G_{6}$  & 2\,,\,20  & 40\\
&\texttt{5B.4.2} & $G_{5}$  & 4\,,\,10  & 40\\
&\texttt{5Nn} & $G_{7}$  & 24  & 48\\
&\texttt{5B} & $G_{8}$  & 4\,,\,20 & 80\\
&\texttt{5S4} & $G_{9}$ & 24  & 96
\end{tabular}\\
\midrule
\end{tabular}
\end{center} \end{footnotesize} \end{table}

\begin{table}[h!] \begin{footnotesize} \begin{center}
\begin{tabular}{cc}
\begin{tabular}[t]{cllcc}
& Sutherland  &  Zywina & $d_v$ & $d$ \\
\toprule
&\texttt{7Ns.2.1} &  $H_{1,1}$ &   $ 6\,,\,9\,,\,18$  & 18 \\
&\texttt{7Ns.3.1} & $G_{1}$ & $12\,,\,18$ & 36 \\
&\texttt{7B.1.1} & $H_{3,1}$ & $ 1\,,\,42$ & 42 \\
&\texttt{7B.1.3} & $H_{4,1}$ & $ 6\,,\, 7$  & 42 \\
 &\texttt{7B.1.2} & $H_{5,2}$ & $ 3\,,\,42$ & 42 \\
&\texttt{7B.1.5} & $H_{5,1}$ &  $ 6\,,\,21$   & 42 \\
 &\texttt{7B.1.6} & $H_{3,2}$ &  $ 2\,,\,21$ & 42 \\
 &\texttt{7B.1.4} &$H_{4,2}$ &  $ 3\,,\,14$   & 42 \\
 &\texttt{7Ns} & $G_{2}$ &  $ 12\,,\,36$ & 72 \\
  &\texttt{7B.6.1} & $G_{3}$  & $ 2\,,\,42$   & 84\\
   &\texttt{7B.6.3} & $G_{4}$  & $ 6\,,\, 14$   & 84 \\
  &\texttt{7B.6.2} & $G_{5}$  & $ 6\,,\, 42$   & 84
\end{tabular} & \begin{tabular}[t]{cllcc}
& Sutherland  &  Zywina & $d_v$ & $d$ \\
\toprule
  &\texttt{7Nn} & $G_{6}$  & $ 48$&    96 \\
   &\texttt{7B.2.1} & $H_{7,2}$&  $ 3\,,\,42$   & 126 \\
   &\texttt{7B.2.3} & $H_{7,1}$ &  $ 6\,,\, 21$   & 126 \\
   &\texttt{7B} & $G_{7}$ & $ 6\,,\, 42$ & 252 \\
\midrule
&\texttt{11B.1.4} & $H_{1,1}$  & 5\,,\,110 & 110\\
&\texttt{11B.1.5} & $H_{2,1}$  & 5\,,\,110  & 110\\
&\texttt{11B.1.6} & $H_{2,2}$  & 10\,,\,55  & 110\\
&\texttt{11B.1.7} & $H_{1,2}$  & 10\,,\,55  & 110\\
&\texttt{11B.10.4} & $G_{1}$  & 10\,,\,110  & 220\\
&\texttt{11B.10.5} & $G_{2}$  & 10\,,\,110  & 220\\
&\texttt{11Nn} & $G_{3}$ & 120 & 240
\end{tabular}\\
\bottomrule
\end{tabular}
\end{center} \end{footnotesize}
\caption{Possible images $G_E(p)\ne \GL_2(\mathbb{F}_p)$, for $p\le 11$, for non-CM elliptic curves $E/\mathbb{Q}$.}\label{tableSutherland}
\vspace{10pt}
\end{table}

\begin{table}[h!] \begin{footnotesize} \begin{center}
\begin{tabular}{cc}
\begin{tabular}[t]{cllcc}
& Sutherland & Zywina & $d_v$ & $d$ \\
\toprule
&\texttt{13S4} &$G_{7}$ & $72\,,\,96$ & 288 \\
 &\texttt{13B.3.1} &$H_{5,1}$ & $3 \,,\, 156$ & 468 \\
 &\texttt{13B.3.2} & $H_{4,1}$ & $ 12\,,\, 39$ & 468 \\
 &\texttt{13B.3.4} & $H_{5,2}$ & $ 6\,,\,156$ & 468 \\
 &\texttt{13B.3.7} & $H_{4,2}$ & $ 12 \,,\, 78$ & 468 \\
  &\texttt{13B.5.1} & $G_{2}$ & $ 4 \,,\, 156$ & 624 \\
  &\texttt{13B.5.2} & $G_{1}$ & $ 12 \,,\, 52$ & 624\\
  &\texttt{13B.5.4} & $G_{3}$ & $ 12 \,,\, 156$ & 624
\end{tabular} & \begin{tabular}[t]{cllcc}
& Sutherland & Zywina & $d_v$ & $d$ \\
\toprule
&\texttt{13B.4.1} & $G_{5}$ & $ 6 \,,\, 156$  & 936 \\
  &\texttt{13B.4.2} & $G_{4}$ & $ 12 \,,\, 78$   & 936 \\
  &\texttt{13B} & $G_{6}$ & $ 12 \,,\, 156$  & 1872 \\
\midrule
&\texttt{17B.4.2} & $G_{1}$ & $ 8 \,,\, 272$  & 1088 \\
&\texttt{17B.4.6} & $G_{2}$ & $ 16 \,,\, 136$  & 1088 \\
\midrule
 &\texttt{37B.8.1} & $G_{1}$ & 12 \,,\,  1332 & 15984 \\
 &\texttt{37B.8.2} & $G_{2}$ &  36 \,,\, 444 & 15984
\end{tabular} \\
\bottomrule
\end{tabular}
\end{center} \end{footnotesize}
\caption{Known images $G_E(p)\ne \GL_2(\mathbb{F}_p)$, for $p=13, 17$ or $37$, for non-CM elliptic curves $E/\mathbb{Q}$.}\label{tableSutherland2}
\vspace{10pt}
\end{table}

\begin{acknowledgments}
The authors gratefully acknowledge support
from the QuantiXLie Center of Excellence, a project co-financed by the Croatian Government and European Union through the
European Regional Development Fund - the Competitiveness and Cohesion Operational Programme (Grant KK.01.1.1.01.0004) and
by the Croatian Science Foundation under the project no. IP-2018-01-1313.
\end{acknowledgments}

\nocite{*}
\bibliographystyle{babplain-fl}
\bibliography{ciklo_bib}

\begin{thebibliography}{10}
  \providebibliographyfont{name}{}%
  \providebibliographyfont{lastname}{}%
  \providebibliographyfont{title}{\emph}%
  \providebibliographyfont{jtitle}{\btxtitlefont}%
  \providebibliographyfont{etal}{\emph}%
  \providebibliographyfont{journal}{}%
  \providebibliographyfont{volume}{}%
  \providebibliographyfont{ISBN}{\MakeUppercase}%
  \providebibliographyfont{ISSN}{\MakeUppercase}%
  \providebibliographyfont{url}{\url}%
  \providebibliographyfont{numeral}{}%
  \expandafter\btxselectlanguage\expandafter {\btxfallbacklanguage}

\expandafter\btxselectlanguage\expandafter {\btxfallbacklanguage}
\bibitem {rj21}
\btxnamefont {B.\btxfnamespacelong J. \btxlastnamefont {Birch}} \btxandlong {}\
  \btxnamefont {W. \btxlastnamefont {Kuyk}}\ (\btxeditorslong
  {})\btxauthorcolon\ \btxtitlefont {Modular functions of one variable IV},
  \btxvolumelong {}\ \btxvolumefont {476}.
\newblock \btxpublisherfont {Springer Berlin, Heidelberg}, 1975.

\bibitem {magma}
\btxnamefont {W. \btxlastnamefont {Bosma}}, \btxnamefont {J. \btxlastnamefont
  {Cannon}}\btxandcomma {} \btxandlong {}\ \btxnamefont {C. \btxlastnamefont
  {Playoust}}\btxauthorcolon\ \btxjtitlefont {\btxifchangecase {{The Magma
  Algebra System I: The User Language}}{{The Magma Algebra System I: The User
  Language}}}.
\newblock \btxjournalfont {J. Symbolic Comput.}, 24(3-4):235--265,
  1997\ifbtxprintISSN {, \mbox{\btxISSN~\btxISSNfont {0747-7171}}}.
\newblock {\latintext \btxurlfont{http://dx.doi.org/10.1006/jsco.1996.0125}},
  Computational algebra and number theory (London, 1993).

\bibitem {bourdonpollack}
\btxnamefont {A. \btxlastnamefont {Bourdon}} \btxandlong {}\ \btxnamefont {P.
  \btxlastnamefont {Pollack}}\btxauthorcolon\ \btxjtitlefont {\btxifchangecase
  {Torsion subgroups of $\mathrm{CM}$ elliptic curves over odd degree number
  fields}{Torsion subgroups of $\mathrm{CM}$ elliptic curves over odd degree
  number fields}}.
\newblock \btxjournalfont {Int. Math. Res. Not. IMRN}, 16:4923--4961, 2017.
\newblock {\latintext \btxurlfont{https://doi.org/10.1093/imrn/rnw163}}.

\btxselectlanguage {english}
\bibitem {najman_bruin}
\btxnamefont {P. \btxlastnamefont {Bruin}} \btxandlong {}\ \btxnamefont {F.
  \btxlastnamefont {Najman}}\btxauthorcolon\ \btxjtitlefont {\btxifchangecase
  {A criterion to rule out torsion groups for elliptic curves over number
  fields}{A criterion to rule out torsion groups for elliptic curves over
  number fields}}.
\newblock \btxjournalfont {Res. Number Theory}, 2(3), 2016.
\newblock {\latintext \btxurlfont{https://doi.org/10.1007/s40993-015-0031-5}}.

\expandafter\btxselectlanguage\expandafter {\btxfallbacklanguage}
\bibitem {chou2}
\btxnamefont {M. \btxlastnamefont {Chou}}\btxauthorcolon\ \btxjtitlefont
  {\btxifchangecase {Torsion of rational elliptic curves over quartic
  {{Galois}} number fields}{Torsion of rational elliptic curves over quartic
  {{Galois}} number fields}}.
\newblock \btxjournalfont {J. Number Theory}, 160:603--628, 2016.
\newblock {\latintext \btxurlfont{https://doi.org/10.1016/j.jnt.2015.09.013}}.

\bibitem {chou}
\btxnamefont {M. \btxlastnamefont {Chou}}\btxauthorcolon\ \btxjtitlefont
  {\btxifchangecase {Torsion of rational elliptic curves over the maximal
  abelian extension of $\mathbb{Q}$}{Torsion of rational elliptic curves over
  the maximal abelian extension of $\mathbb{Q}$}}.
\newblock \btxjournalfont {Pacific J. Math.}, 302(2):481--509, 2019.
\newblock {\latintext \btxurlfont{https://doi.org/10.2140/pjm.2019.302.481}}.

\bibitem {30}
\btxnamefont {P.\btxfnamespacelong L. \btxlastnamefont {Clark}}, \btxnamefont
  {P. \btxlastnamefont {Corn}}, \btxnamefont {A. \btxlastnamefont
  {Rice}}\btxandcomma {} \btxandlong {}\ \btxnamefont {J. \btxlastnamefont
  {Stankiewicz}}\btxauthorcolon\ \btxjtitlefont {\btxifchangecase {Computation
  on elliptic curves with complex multiplication}{Computation on elliptic
  curves with complex multiplication}}.
\newblock \btxjournalfont {LMS J. Comput. Math.}, 17:509--539, 2014.
\newblock {\latintext \btxurlfont{https://doi.org/10.1112/S1461157014000072}}.

\bibitem {hoeij}
\btxnamefont {M. \btxlastnamefont {Derickx}}, \btxnamefont {A. \btxlastnamefont
  {Etropolski}}, \btxnamefont {M.\btxfnamespacelong V. \btxlastnamefont
  {Hoeij}}, \btxnamefont {J.\btxfnamespacelong S. \btxlastnamefont
  {Morrow}}\btxandcomma {} \btxandlong {}\ \btxnamefont {D. \btxlastnamefont
  {Zureick-Brown}}\btxauthorcolon\ \btxjtitlefont {\btxifchangecase {Sporadic
  cubic torsion}{Sporadic Cubic Torsion}}.
\newblock \btxjournalfont {Algebra Number Theory}, 15 (7):1837--1864, 2021.
\newblock {\latintext \btxurlfont{https://doi.org/10.2140/ant.2021.15.1837}}.

\bibitem {80}
\btxnamefont {G. \btxlastnamefont {Fung}}, \btxnamefont {H. \btxlastnamefont
  {Str\"{o}her}}, \btxnamefont {H. \btxlastnamefont {Williams}}\btxandcomma {}
  \btxandlong {}\ \btxnamefont {H. \btxlastnamefont {Zimmer}}\btxauthorcolon\
  \btxjtitlefont {\btxifchangecase {Torsion groups of elliptic curves with
  integral $j$-invariant over pure cubic fields}{Torsion groups of elliptic
  curves with integral $j$-invariant over pure cubic fields}}.
\newblock \btxjournalfont {J. Number Theory}, 36:12--45, 1990.
\newblock {\latintext
  \btxurlfont{https://doi.org/10.1016/0022-314X(90)90003-A}}.

\bibitem {38}
\btxnamefont {E. \btxlastnamefont {Gonz\'alez-Jim\'enez}}\btxauthorcolon\
  \btxjtitlefont {\btxifchangecase {Complete classification of the torsion
  structures of rational elliptic curves over quintic number fields}{Complete
  classification of the torsion structures of rational elliptic curves over
  quintic number fields}}.
\newblock \btxjournalfont {J. Algebra}, 478:484--505, 2017.
\newblock {\latintext
  \btxurlfont{https://doi.org/10.1016/j.jalgebra.2017.01.012}}.

\bibitem {pprimarnatorzija}
\btxnamefont {E. \btxlastnamefont {Gonz\'{a}lez-Jim\'{e}nez}} \btxandlong {}\
  \btxnamefont {\'{A}. \btxlastnamefont {Lozano-Robledo}}\btxauthorcolon\
  \btxjtitlefont {\btxifchangecase {On the minimal degree of definition of
  $p$-primary torsion subgroups of elliptic curves}{On the minimal degree of
  definition of $p$-primary torsion subgroups of elliptic curves}}.
\newblock \btxjournalfont {Math. Res. Lett.}, 24:1067--1096, 2017.
\newblock {\latintext
  \btxurlfont{https://dx.doi.org/10.4310/MRL.2017.v24.n4.a7}}.

\bibitem {growth}
\btxnamefont {E. \btxlastnamefont {Gonz\'{a}lez-Jim\'{e}nez}} \btxandlong {}\
  \btxnamefont {F. \btxlastnamefont {Najman}}\btxauthorcolon\ \btxjtitlefont
  {\btxifchangecase {Growth of torsion groups of elliptic curves upon base
  change}{Growth of torsion groups of elliptic curves upon base change}}.
\newblock \btxjournalfont {Math. Comp.}, 89:1457--1485, 2020.
\newblock {\latintext \btxurlfont{https://doi.org/10.1090/mcom/3478}}.

\btxselectlanguage {english}
\bibitem {guzkri}
\btxnamefont {T. \btxlastnamefont {Gu\v{z}vi\'{c}}} \btxandlong {}\
  \btxnamefont {I. \btxlastnamefont {Krijan}}\btxauthorcolon\ \btxtitlefont
  {\btxifchangecase {Torsion groups of elliptic curves over some infinite
  abelian extensions of $\mathbb{Q}$}{Torsion groups of elliptic curves over
  some infinite abelian extensions of $\mathbb{Q}$}}.
\newblock Submitted.

\expandafter\btxselectlanguage\expandafter {\btxfallbacklanguage}
\bibitem {tomi1}
\btxnamefont {T. \btxlastnamefont {Gužvić}}\btxauthorcolon\ \btxjtitlefont
  {\btxifchangecase {Torsion growth of rational elliptic curves in sextic
  number fields}{Torsion growth of rational elliptic curves in sextic number
  fields}}.
\newblock \btxjournalfont {Journal of Number Theory}, 220:330--345, 2021.
\newblock {\latintext \btxurlfont{https://doi.org/10.1016/j.jnt.2020.09.010}}.

\bibitem {tomi2}
\btxnamefont {T. \btxlastnamefont {Gužvić}}\btxauthorcolon\ \btxjtitlefont
  {\btxifchangecase {Torsion of elliptic curves with rational $j$-invariant
  defined over number fields of prime degree}{Torsion of elliptic curves with
  rational $j$-invariant defined over number fields of prime degree}}.
\newblock \btxjournalfont {Proc. Amer. Math. Soc.}, 149:3261--3275, 2021.
\newblock {\latintext \btxurlfont{https://doi.org/10.1090/proc/15500}}.

\bibitem {tomi3}
\btxnamefont {T. \btxlastnamefont {Gužvić}}\btxauthorcolon\ \btxtitlefont
  {\btxifchangecase {Torsion of elliptic curves with rational $j$-invariant
  over number fields}{Torsion of elliptic curves with rational $j$-invariant
  over number fields}}.
\newblock Phd thesis, 2021.

\bibitem {kamienny}
\btxnamefont {S. \btxlastnamefont {Kamienny}}\btxauthorcolon\ \btxjtitlefont
  {\btxifchangecase {Torsion points on elliptic curves and $q$-coefficients of
  modular forms}{Torsion points on elliptic curves and $q$-coefficients of
  modular forms}}.
\newblock \btxjournalfont {Invent. Math.}, 109:221--229, 1992.
\newblock {\latintext \btxurlfont{https://doi.org/10.1007/BF01232025}}.

\bibitem {katz}
\btxnamefont {N.M. \btxlastnamefont {Katz}}\btxauthorcolon\ \btxjtitlefont
  {\btxifchangecase {Galois properties of torsion points on abelian
  varieties}{Galois properties of torsion points on abelian varieties}}.
\newblock \btxjournalfont {Invent. Math.}, 62:481--502, 1981.
\newblock {\latintext \btxurlfont{https://doi.org/10.1007/BF01394256}}.

\bibitem {kenku1}
\btxnamefont {M.\btxfnamespacelong A. \btxlastnamefont {Kenku}}\btxauthorcolon\
  \btxjtitlefont {\btxifchangecase {The modular curve ${X_0(39)}$ and rational
  isogeny}{The modular curve ${X_0(39)}$ and rational isogeny}}.
\newblock \btxjournalfont {Math. Proc. Cambridge Philos. Soc.}, 85:21--23,
  1979.
\newblock {\latintext \btxurlfont{https://doi.org/10.1017/S0305004100055444}}.

\bibitem {kenku3}
\btxnamefont {M.\btxfnamespacelong A. \btxlastnamefont {Kenku}}\btxauthorcolon\
  \btxjtitlefont {\btxifchangecase {The modular curve ${X_0(169)}$ and rational
  isogeny}{The modular curve ${X_0(169)}$ and rational isogeny}}.
\newblock \btxjournalfont {J. London Math. Soc.}, s2-22:239--244, 1980.
\newblock {\latintext \btxurlfont{https://doi.org/10.1112/jlms/s2-22.2.239}},
  Corrigendum: https://doi.org/10.1112/jlms/s2-23.3.428-s.

\bibitem {kenku2}
\btxnamefont {M.\btxfnamespacelong A. \btxlastnamefont {Kenku}}\btxauthorcolon\
  \btxjtitlefont {\btxifchangecase {The modular curves ${X_0(65)}$ and
  ${X_0(91)}$ and rational isogeny}{The modular curves ${X_0(65)}$ and
  ${X_0(91)}$ and rational isogeny}}.
\newblock \btxjournalfont {Math. Proc. Cambridge Philos. Soc.}, 87:15--20,
  1980.
\newblock {\latintext \btxurlfont{https://doi.org/10.1017/S0305004100056462}}.

\bibitem {kenku4}
\btxnamefont {M.\btxfnamespacelong A. \btxlastnamefont {Kenku}}\btxauthorcolon\
  \btxjtitlefont {\btxifchangecase {The modular curve ${X_0(125)}$, ${X_1(25)}$
  and ${X_1(49)}$}{The modular curve ${X_0(125)}$, ${X_1(25)}$ and
  ${X_1(49)}$}}.
\newblock \btxjournalfont {J. London Math. Soc.}, s2-23:415--427, 1981.
\newblock {\latintext \btxurlfont{https://doi.org/10.1112/jlms/s2-23.3.415}}.

\bibitem {kenkumomose}
\btxnamefont {M.\btxfnamespacelong A. \btxlastnamefont {Kenku}} \btxandlong {}\
  \btxnamefont {F. \btxlastnamefont {Momose}}\btxauthorcolon\ \btxjtitlefont
  {\btxifchangecase {Torsion points on elliptic curves defined over quadratic
  fields}{Torsion points on elliptic curves defined over quadratic fields}}.
\newblock \btxjournalfont {Nagoya Math. J.}, 109:125--149, 1988.
\newblock {\latintext \btxurlfont{https://doi.org/10.1017/S0027763000002816}}.

\bibitem {krijanphd}
\btxnamefont {I. \btxlastnamefont {Krijan}}\btxauthorcolon\ \btxtitlefont
  {\btxifchangecase {Torsion groups of elliptic curves over infinite abelian
  extensions of $\mathbb{Q}$}{Torsion groups of elliptic curves over infinite
  Abelian extensions of $\mathbb{Q}$}}.
\newblock {\latintext
  \btxurlfont{https://web.math.pmf.unizg.hr/~ikrijan/pdfs/disertacija.pdf}},
  Phd thesis, 2020.

\bibitem {loz}
\btxnamefont {\'{A}. \btxlastnamefont {Lozano-Robledo}}\btxauthorcolon\
  \btxjtitlefont {\btxifchangecase {On the field of definition of $p$-torsion
  points on elliptic curves over the rationals}{On the field of definition of
  $p$-torsion points on elliptic curves over the rationals}}.
\newblock \btxjournalfont {Math. Ann.}, 357:279--305, 2013.
\newblock {\latintext \btxurlfont{https://doi.org/10.1007/s00208-013-0906-5}}.

\bibitem {mazurtorzija}
\btxnamefont {B. \btxlastnamefont {Mazur}}\btxauthorcolon\ \btxjtitlefont
  {\btxifchangecase {Modular curves and the {Eisenstein} ideal}{Modular curves
  and the {Eisenstein} ideal}}.
\newblock \btxjournalfont {Inst. Hautes Etudes Sci. Publ. Math.}, 47:33--186,
  1978.
\newblock {\latintext \btxurlfont{https://doi.org/10.1007/BF02684339}}.

\bibitem {mazurizog}
\btxnamefont {B. \btxlastnamefont {Mazur}}\btxauthorcolon\ \btxjtitlefont
  {\btxifchangecase {Rational isogenies of prime degree}{Rational isogenies of
  prime degree}}.
\newblock \btxjournalfont {Invent. Math.}, 44:129--162, 1978.
\newblock {\latintext \btxurlfont{https://doi.org/10.1007/BF01390348}}.

\bibitem {21}
\btxnamefont {L. \btxlastnamefont {Merel}}\btxauthorcolon\ \btxjtitlefont
  {\btxifchangecase {Bornes pour la torsion des courbes elliptiques sur les
  corps de nombres}{Bornes pour la torsion des courbes elliptiques sur les
  corps de nombres}}.
\newblock \btxjournalfont {Invent. Math.}, 124:437--449, 1996.
\newblock {\latintext \btxurlfont{https://doi.org/10.1007/s002220050059}}.

\bibitem {81}
\btxnamefont {H. \btxlastnamefont {M\"{u}ller}}, \btxnamefont {H.
  \btxlastnamefont {Str\"{o}her}}\btxandcomma {} \btxandlong {}\ \btxnamefont
  {H. \btxlastnamefont {Zimmer}}\btxauthorcolon\ \btxjtitlefont
  {\btxifchangecase {Torsion groups of elliptic curves with integral
  $j$-invariant over quadratic fields}{Torsion groups of elliptic curves with
  integral $j$-invariant over quadratic fields}}.
\newblock \btxjournalfont {J. Reine Angew. Math.}, 397:100--161, 1989.
\newblock {\latintext \btxurlfont{https://doi.org/10.1515/crll.1989.397.100}}.

\btxselectlanguage {english}
\bibitem {najman_torshypN}
\btxnamefont {F. \btxlastnamefont {Najman}}\btxauthorcolon\ \btxjtitlefont
  {\btxifchangecase {Complete classification of torsion of elliptic curves over
  quadratic cyclotomic fields}{Complete classification of torsion of elliptic
  curves over quadratic cyclotomic fields}}.
\newblock \btxjournalfont {J. Number Theory}, 130:1964--1968, 2010.
\newblock {\latintext \btxurlfont{https://doi.org/10.1016/j.jnt.2009.12.008}}.

\bibitem {najCyclo}
\btxnamefont {F. \btxlastnamefont {Najman}}\btxauthorcolon\ \btxjtitlefont
  {\btxifchangecase {Torsion of elliptic curves over cyclotomic quadratic
  fields}{Torsion of elliptic curves over cyclotomic quadratic fields}}.
\newblock \btxjournalfont {Math. J. Okayama Univ.}, 53:75--82, 2011.
\newblock {\latintext
  \btxurlfont{https://web.math.pmf.unizg.hr/~fnajman/tors_kon.pdf}}.

\expandafter\btxselectlanguage\expandafter {\btxfallbacklanguage}
\bibitem {najman_ratcubic}
\btxnamefont {F. \btxlastnamefont {Najman}}\btxauthorcolon\ \btxjtitlefont
  {\btxifchangecase {Torsion of rational elliptic curves over cubic fields and
  sporadic points on ${X_1(n)}$}{Torsion of rational elliptic curves over cubic
  fields and sporadic points on ${X_1(n)}$}}.
\newblock \btxjournalfont {Math. Res. Lett.}, 23(1):245--272, 2016.
\newblock {\latintext
  \btxurlfont{https://dx.doi.org/10.4310/MRL.2016.v23.n1.a12}}.

\bibitem {32}
\btxnamefont {L.\btxfnamespacelong D. \btxlastnamefont {Olson}}\btxauthorcolon\
  \btxjtitlefont {\btxifchangecase {Points of finite order on elliptic curves
  with complex multiplication}{Points of finite order on elliptic curves with
  complex multiplication}}.
\newblock \btxjournalfont {Manuscripta Math.}, 14:195--205, 1974.
\newblock {\latintext \btxurlfont{https://doi.org/10.1007/BF01171442}}.

\bibitem {82}
\btxnamefont {A. \btxlastnamefont {Peth\H{o}}}, \btxnamefont {T.
  \btxlastnamefont {Weis}}\btxandcomma {} \btxandlong {}\ \btxnamefont {H.
  \btxlastnamefont {Zimmer}}\btxauthorcolon\ \btxjtitlefont {\btxifchangecase
  {Torsion groups of elliptic curves with integral $j$-invariant over general
  cubic number fields}{Torsion groups of elliptic curves with integral
  $j$-invariant over general cubic number fields}}.
\newblock \btxjournalfont {Internat. J. Algebra Comput.}, 7:353--413, 1997.
\newblock {\latintext \btxurlfont{https://doi.org/10.1142/S0218196797000174}}.

\bibitem {90}
\btxnamefont {O. \btxlastnamefont {Propp}}\btxauthorcolon\ \btxjtitlefont
  {\btxifchangecase {Cartan images and $\ell$-torsion points of elliptic curves
  with rational $j$-invariant}{Cartan images and $\ell$-torsion points of
  elliptic curves with rational $j$-invariant}}.
\newblock \btxjournalfont {Res. Number Theory}, 4, 2018.
\newblock {\latintext \btxurlfont{https://doi.org/10.1007/s40993-018-0097-y}}.

\bibitem {sutherlandgalrep}
\btxnamefont {A.\btxfnamespacelong V. \btxlastnamefont
  {Sutherland}}\btxauthorcolon\ \btxjtitlefont {\btxifchangecase {Computing
  images of {Galois} representations attached to elliptic curves}{Computing
  images of {Galois} representations attached to elliptic curves}}.
\newblock \btxjournalfont {Forum Math. Sigma}, 4:41--79, 2016.
\newblock {\latintext \btxurlfont{https://doi.org/10.1017/fms.2015.33}}.

\bibitem {lmfdb}
\btxnamefont {\btxlastnamefont {{The LMFDB Collaboration}}}\btxauthorcolon\
  \btxtitlefont {\btxifchangecase {The {L}-functions and modular forms
  database}{The {L}-functions and Modular Forms Database}}, 2019.
\newblock {\latintext \btxurlfont{http://www.lmfdb.org}}, [Online; accessed 30
  October 2019].

\bibitem {zywina}
\btxnamefont {D. \btxlastnamefont {Zywina}}\btxauthorcolon\ \btxtitlefont
  {\btxifchangecase {On the possible images of the mod $\ell$ representations
  associated to elliptic curves over $\mathbb{Q}$}{On the possible images of
  the mod $\ell$ representations associated to elliptic curves over
  $\mathbb{Q}$}}.
\newblock {\latintext \btxurlfont{https://arxiv.org/pdf/1508.07660.pdf}},
  Accepted for publication in Proceedings of the AMS.

\end{thebibliography}

\end{document}